\newcommand{\specialcell}[1]{\ifmeasuring@#1\else\omit$\displaystyle#1$\ignorespaces\fi}
\newcommand{\hm}[1]{\textbf{*}\leavevmode{\marginpar{\tiny%
$\hbox to 0mm{\hspace*{-0.5mm}$\leftarrow$\hss}%
\vcenter{\vrule depth 0.1mm height 0.1mm width \the\marginparwidth}%
\hbox to 0mm{\hss$\rightarrow$\hspace*{-0.5mm}}$\\\relax\raggedright #1}}}
\newtheorem{theo}{Theorem}[section]
\newtheorem{proposi}[theo]{Proposition}
\newtheorem{lemma}[theo]{Lemma}
\newtheorem{coro}[theo]{Corollary}
\theoremstyle{definition}
\newtheorem{exam}[theo]{Example}
\newtheorem{rem}[theo]{Remark}
\newtheorem{defini}[theo]{Definition}
\newcommand{\Gg}{{\mathcal G}}
\newcommand{\Hh}{{\mathcal H}}
\newcommand{\Kk}{{\mathcal K}}
\newcommand{\Ll}{{\mathcal L}}
\newcommand{\Pp}{{\mathcal P}}
\newcommand{\Ss}{{\mathcal S}}
\newcommand{\1}{{\mathbf 1}}
\newcommand{\XX}{{\mathbf X}}
\newcommand{\CM}{{\mathbb C}}
\newcommand{\NM}{{\mathbb N}}
\newcommand{\RM}{{\mathbb R}}
\newcommand{\ZM}{{\mathbb Z}}
\newcommand{\sop}{\sigma^{\mathrm{op}}}               %%Set of Limit operators
\newcommand{\spec}{\mathrm{spec}}                     %%spectrum
\newcommand{\specess}{\mathrm{spec}_{\mathrm{ess}}}   %%essential spectrum
\DeclareMathOperator{\Orb}{Orb}                       %%Orbit
\newcommand{\PE}{{\rm \Psi E}}                        %%pseudo-ergodic (subscript)
\DeclareMathOperator{\prop}{prop}
\newcommand\pto{
   \unitlength0.1ex
   \begin{picture}(30,15)
   \put(15,16){\makebox(0,0)[]{\tiny $\Pp$}}
   \put(15,5){\makebox(0,0)[]{$\to$}}
   \end{picture}
}
\newcommand\MyPairedDelimiter{%
  \@ifstar{\My@Paired@Delimiter{{}}}
          {\My@Paired@Delimiter{}}%
}
\newcommand\My@Paired@Delimiter[4]{%
  \newcommand#2{%
    \@ifstar{\start@PD{#1}{\delimitershortfall=-1sp}{#3}{#4}}
            {\start@PD{#1}{}{#3}{#4}}%
  }%
}
\newcommand\start@PD[5]{%
  #1\mathopen{\mathpalette\put@delim@helper{\put@delim{#2}{#3}{.}{#5}}}%
  #5%
  \mathclose{\mathpalette\put@delim@helper{\put@delim{#2}{.}{#4}{#5}}}%
}
\newcommand\put@delim@helper[2]{%
  \hbox{$\m@th\nulldelimiterspace=0pt #2#1$}%
}
\newcommand\put@delim[5]{%
  \setbox\z@\hbox{$\m@th#5{#4}$}%
  \setbox\tw@\null
  \ht\tw@\ht\z@ \dp\tw@\dp\z@
  #1#5%
  \left#2\box\tw@\right#3%
}
\MyPairedDelimiter*{\abs}{\lvert}{\rvert}
\MyPairedDelimiter*{\norm}{\lVert}{\rVert}
\MyPairedDelimiter{\set}{\{}{\}}
\DeclareMathOperator{\Plim}{\Pp\text{-}lim}
\begin{document}

\title[]{On the spectrum of operator families on discrete groups over minimal dynamical systems}
%\thanks{??}
\author{Siegfried Beckus, Daniel Lenz, Marko Lindner, Christian Seifert}

\address{Mathematisches Institut\\
Friedrich-Schiller-Universit\"at, Jena\\
07743 Jena, Germany}
\email{siegfried.beckus@uni-jena.de}
\email{daniel.lenz@uni-jena.de}

\address{Technische Universit\"at Hamburg-Harburg\\
Institut f\"ur Mathematik\\
21073 Hamburg, Germany}
\email{marko.lindner@tuhh.de}

\address{Technische Universit\"at Hamburg-Harburg\\
Institut f\"ur Mathematik\\
21073 Hamburg, Germany and
Ludwig-Maxilians-Universit\"at M\"unchen\\
Mathematisches Institut\\
Theresienstra{\ss}e 39\\
80333 M\"unchen, Germany}
\email{christian.seifert@tuhh.de}

\begin{abstract}
It is well known that, given an equivariant and continuous (in a
suitable sense) family of selfadjoint operators in a Hilbert space
over a minimal dynamical system, the spectrum of all operators from
that family coincides. As shown recently similar results also hold
for  suitable families of non-selfadjoint operators in $\ell^p
(\ZM)$.  Here,  we generalize this  to a large class of bounded
linear operator families on Banach-space valued $\ell^p$-spaces over
countable discrete groups. We also provide equality of the
pseudospectra for operators in such a family.  A main tool for our
analysis are techniques from limit operator theory. 
\end{abstract}

%%%%%%%%%%%%%%%%%%%%%%%%%%%%%%%%%%%%%%%%%%%%%%%%%%%%%%%%%%%%%%%%%%%%

\maketitle

MSC2010: 47A10, 47A35, 47B37\\
Keywords: minimal dynamical system, pseudo-ergodicity, spectrum, $\Pp$-theory

\section{Introduction}
It is well-known that the spectrum of a selfadjoint operator $A$ in
a Hilbert space $\Hh$ can be approximated by the spectra of a
sequence $(A_n)$ of selfadjoint operators in $\Hh$ converging to $A$
in strong resolvent sense. This fact--sometimes called
semi-continuity of the spectrum--implies that the spectrum cannot
suddenly expand in the limit. Applied to a family of equivariant
selfadjoint operators over a minimal dynamical system this
semi-continuity yields that the spectrum (as a set) agrees for all
operators in that family, see \cite{Lenz2, Joh, CFKS, BIST,
LenzSeifertStollmann2014} for various versions of this theorem.
However, semi-continuity of the spectrum fails to be true for
non-selfadjoint operators \cite[Example IV.3.8]{Kato1980}.

Nevertheless, as shown in \cite{BeckusLenzLindnerSeifert2015}
constancy of the spectrum is still valid for suitable families of
operators in $\ell^p (\ZM)$ which are equivariant over dynamical
systems with a minimal $\ZM$ action, and \cite{Seifert2015} generalizes this to the case $\ell^p(\ZM^N)$.
This suggests  that minimality
of the dynamical system $(\Omega,\Gg,\alpha)$, where $\Omega$ is a
compact metric space, $\Gg$ is a countable group and $\alpha$ is
the group action of $\Gg$ on $\Omega$, implies constancy of the
spectrum of an equivariant family of (not nesessarily selfadjoint)
operators  $(A(\omega))_{\omega\in\Omega}$ on $\Gg$. Indeed,  as far
as dynamical systems are concerned the arguments given in
\cite{BeckusLenzLindnerSeifert2015, Seifert2015} are rather general (and simple)
and not restricted to  $\ZM$ and  it was already noted in the
introduction of \cite{BeckusLenzLindnerSeifert2015} that the results
can be extended to all situations where a suitable theory of limit
operators is at hand.

Here, we discuss how such a suitable theory of limit operator is
indeed at hand in a rather general situation dealing with Banach
space valued $\ell^p$ spaces. As a consequence we obtain  constancy
of the spectra  for $\Gg$ being countable discrete (not nesessarily abelian) and
$(A(\omega))_{\omega\in\Omega}$ is a (suitable) family of bounded
operators on $\ell^p(\Gg,Y)$, where $p\in[1,\infty]$ and $Y$ is
Banach space. In order to prove this fact we make use of the theory
of limit operators developed in
\cite{RabinovichRochSilbermann1998,RabinovichRochSilbermann2004,Lindner2006,CL08,CL10,RaRoRoe,LindnerSeidel2014,SpakWillett}.
The right topological framework turns out to be the so-called
$\Pp$-theory, see
\cite{RochSilbermann1989,ProssdorfSilbermann,RabinovichRochSilbermann2004,Seidel2014}.
Following  \cite{BeckusLenzLindnerSeifert2015} we introduce the
notion of pseudoergodic elements of $\Omega$, as first appeared in
\cite{Davies2001b} in a somewhat different way. We will show that
pseudoergodicity of elements is closely related to minimality of the
dynamical system. However, for pseudoergodic elements the set of
limit operators can be described rather easily, thus yielding the
assertion of our main theorem. We moreover prove constancy of the
pseudospectrum, which is an intensively studied object for
non-selfadjoint operators, see \cite{TrefethenEmbree2005} and
references therein.

In Section \ref{sec:P-theory} we collect basic notions and facts
from the $\Pp$-theory needed for our purpose. We will also introduce
the class of operators on groups we consider. Section
\ref{sec:dynamical_systems} deals with dynamical systems and shows
the interplay between pseudoergodicity of elements and minimality.
We will combine these two things in Section
\ref{sec:operator_over_DS} when we introduce the families of
operators over dynamical systems. There, we also state and prove our
main theorems. In Section~\ref{sec:operators_over_finitely_generated_groups} and Section~\ref{sec:operators_over_ZN} we specialize to the case of finitely
generated groups and to the group $\ZM^N$, which is mostly dealt
with in the literature
\cite{RabinovichRochSilbermann1998,RabinovichRochSilbermann2004,Lindner2006,CL10,LindnerSeidel2014}.

\section{Limit operators, the $\Pp$-theory and operators on discrete groups}
\label{sec:P-theory}

%\bigskip

In this section we recall the $\Pp$-theory and the notion of limit operators, where we focus on Banach space valued sequence spaces $\XX$ on general countable discrete groups $(\Gg,+)$.
%The concept of uniform approximate identity $\Pp$ on $\Ll(\XX)$ will be introduced.

~

{\bf Fredholm operators and essential spectrum.}
We start with a complex Banach space $\XX$ and denote the sets of all bounded and all compact operators on $\XX$ by $\Ll(\XX)$ and $\Kk(\XX)$, respectively. Recall that $\Kk(\XX)$ forms a closed two-sided ideal in the Banach algebra $\Ll(\XX)$. An operator $A\in\Ll(\XX)$ is a {\em Fredholm operator} if its kernel has finite dimension and its image has finite co-dimension. By Calkin's theorem this is equivalent to invertibility of the coset $A+\Kk(\XX)$ in the quotient algebra $\Ll(\XX)/\Kk(\XX)$ -- also known as the Calkin algebra. For an operator $A\in \Ll(\XX)$ we write $\spec(A)$ for its spectrum and $\specess(A)$ for its {\em essential spectrum}, where the latter refers to the set of all $\lambda\in\CM$ such that $A-\lambda I$ is not Fredholm; it is the spectrum of the coset $A+\Kk(\XX)$ in the Calkin algebra.

%Recall that an operator $A\in \Ll(\XX)$ is called \emph{Fredholm} if $A$ is invertible in the quotient algebra $\Ll(\XX)/\Kk(\XX)$, where $\Kk(\XX)$ denotes the set of compact operators on $\XX$.
%Further, $A\in \Ll(\XX)$ is \emph{invertible at infinity} whenever there exists an operator $B\in \Ll(\XX)$ such that $I-AB,I-BA\in \Kk(\XX)$.
%In the classical case these two notions turn out to be equivalent (Atkinson's Theorem \cite{Atkinson1951}).

~

{\bf Limit operators.}
An important (and in our setting the only) part of the spectrum of an operator is the essential spectrum, leading to the study of Fredholm operators, i.e., invertibility modulo compact operators. The first successful study for a large class of operators, the so-called band-dominated operators, on a space $\XX=\ell^p(\ZM^N,Y)$ with $\dim Y<\infty$ and $p\in (1,\infty)$, was done by Lange and Rabinovich \cite{LangeRabinovich1985} and finally with full proofs in \cite{RabinovichRochSilbermann1998} by Rabinovich, Roch and Silbermann, both are based on works of Muhamadiev \cite{Muh3} and Simonenko \cite{Simonenko68}. See Section 1.2 of \cite{CL10} for a more detailed history of the subject.

The argument in a nutshell: The Fredholm property of an operator $A$ on $\XX$ is invariant under compact perturbations such as arbitrary modifications to finitely many entries of the matrix $(a_{ij})_{i,j\in\ZM^N}$ that is associated to $A$; so the property must be ``encoded'' in this matrix ``at infinity''. To gather this information, one looks at limits of translates of the matrix (or, likewise, the operator $A$) as translation goes to infinity. Such limits are called limit operators of $A$, and there are usually many of them, each associated to another sequence of translations. \cite{RabinovichRochSilbermann1998} shows that $A$ is Fredholm iff all its limit operators are invertible (and their inverses are uniformly bounded -- this condition was later shown to be redundant \cite{LindnerSeidel2014}), so that the essential spectrum of $A$ is the union of spectra of its limit operators.

~

{\bf $\Pp$-theory.}
An important aspect is the topology in which limits of translates of $A$ are considered. This topology should be 
such that exactly the compact operators disappear (meaning that all their limit operators are zero). 

% neu
%\noindent\rule{\textwidth}{2pt}
%{\tt Neuer Einschub (soll das alte Geblubber stark abk\"urzen):}\\
In the simple case $\XX=\ell^2(\ZM^N,\CM)$, compact operators are easily described: 
If we write $P_n$ for the operator on $\XX$ of multiplication by the characteristic function 
of the set $\{-n,...,n\}^N\subset\ZM^N$, then $K\in\Ll(\XX)$ is compact if and only if both
\begin{equation}\label{eq:comp}
\|K(I-P_n)\|\to 0\qquad\text{and}\qquad\|(I-P_n)K\|\to 0\qquad\text{as}\quad n\to\infty.
\end{equation}
On the matrix level, this means that $K=(k_{ij})$ has finite support or is the norm limit of a sequence
of such matrices.

The characterization of compact operators by \eqref{eq:comp} still holds if $\XX=\ell^p(\ZM^N,Y)$ with $p\in(1,\infty)$ and $\dim Y<\infty$ but
it is no longer true if $p\in\{1,\infty\}$ and/or $\dim Y=\infty$. (See e.g. Example~\ref{ex:RandSchr} below
for a natural instance of $\XX=\ell^2(\ZM^N,Y)$ with $\dim Y=\infty$.)

The $\Pp$-theory is aiming to unify all these cases by putting the sequence $\Pp:=(P_n)$ at center stage 
and going from there -- with tailor-made versions of compactness, Fredholmness and convergence. 
This is not limited to the particular sequence $\Pp$ introduced above and not even to $\ell^p$-spaces $\XX$:

~ 

Let $\XX$ be a complex Banach space. Following the lines of \cite{RabinovichRochSilbermann2004,SeidelSilbermann2012,Seidel2014}, we say that a bounded sequence $\Pp:=(P_n)_{n\in\NM}$ of operators in $\Ll(\XX)$ is called an \emph{approximate projection} whenever
\begin{itemize}
\item for all $n\in\NM$, the operator $P_n$ is not equal to the zero operator respectively the identity operator, and
\item for all $m\in\NM$, there exists an $N_m\in\NM$ such that $P_nP_m=P_mP_n=P_m$ for all $n\geq N_m$.
\end{itemize}
An approximate projection $\Pp$ is called an \emph{approximate identity} if for all $x\in\XX$ the inequality $\sup_{n\in\NM} \|P_nx\|\geq \|x\|$ holds.
This property provides that the elements of the family $\Pp$ asymptotically exploit the whole space $\XX$.
For an approximate projection $\Pp$, define $S_1:=P_1$ and $S_n:=P_{n+1}-P_n,\; n\geq 1$. Then $\Pp$ is called \emph{uniform} if the supremum
$\sup\{\|\sum_{k\in U}S_k\|\; :\; U\subseteq\NM \text{ finite} \}$ is finite.

\begin{defini}[$\Pp$-compact operators]
  Let $\Pp:=(P_n)_{n\in\NM}$ be an approximate projection. An operator $K\in \Ll(\XX)$ is called \emph{$\Pp$-compact} if \eqref{eq:comp} holds.
%  \[\lim\limits_{n\to\infty}\norm{(I-P_n)K} + \norm{K(I-P_n)} = 0 .\]
  Denote  by $\Kk(\XX,\Pp)$ the set of all $\Pp$-compact operators in $\Ll(\XX)$.
\end{defini}

One setback is that, unlike $\Kk(\XX)$, the new set $\Kk(\XX,\Pp)$ is not an ideal in the algebra $\Ll(\XX)$ of all bounded linear operators on $\XX$. Therefore pass to a smaller algebra -- the so-called multiplier algebra of $\Kk(\XX,\Pp)$ in $\Ll(\XX)$. This is the set
\[\Ll(\XX,\Pp)\ :=\ \{A\in \Ll(\XX):\; AK, KA\in \Kk(\XX,\Pp)\,\text{for all}\, K\in \Kk(\XX,\Pp)\}.\]
Then $\Ll(\XX,\Pp)$ is a closed, unital Banach subalgebra of $\Ll(\XX)$ containing $\Kk(\XX,\Pp)$ as a closed two-sided ideal, see \cite[Proposition 1.1.8]{RabinovichRochSilbermann2004}.
In Figure~\ref{fig:venn_diagramm} an illustration of the relations between the different algebras is sketched.
%, where $I\in \Ll(\XX)$ denotes the identity operator.

%Finally, here is step (v):
\begin{defini}[$\Pp$-Fredholm operators]
We say that $A\in \Ll(\XX,\Pp)$ is a \emph{$\Pp$-Fredholm operator} if $A$ is invertible in the quotient algebra $\Ll(\XX,\Pp)/\Kk(\XX,\Pp)$.
\end{defini}
By \cite[Corollary 12]{Seidel2014}, every Fredholm operator in $\Ll(\XX,\Pp)$ is a $\Pp$-Fredholm operator. The reverse implication requires $\dim Y$ to be finite.
%and $A\in \Ll(\XX)$ is \emph{invertible at infinity with respect to $\Pp$} if there exists a $B\in\Ll(\XX)$ such that $I-AB,I-BA\in \Kk(\XX,\Pp)$. By \cite[Theorem 1.16]{SeidelSilbermann2012} these notions turn out to be equivalent, as long as $\Pp$ is a uniform approximate identity.

\begin{figure}[tb]
  \setlength{\unitlength}{0.78mm}
  \begin{tabular}{|c|c|c|}
  \hline
  \rule{0mm}{2.5ex} & $\dim Y<\infty$ & $\dim Y=\infty$\\
  \hline
  \hline
  \begin{picture}(20,62)
    \put(10,31){\makebox(0,0)[]{$1<p<\infty$}}
  \end{picture} &

  %%%%%% Bild fuer 1 < p < oo, dim X < oo %%%%%%%%%
  \begin{picture}(60,62)
    \put(30,30){\oval(60,60)[]}
    \put(3,58){\makebox(0,0)[tl]{$\Ll(\XX)=\Ll(\XX,\Pp)$}}
    \put(38,22){\oval(40,40)[]}
    \put(38,40){\makebox(0,0)[t]{$\Kk(\XX)=\Kk(\XX,\Pp)$}}
    \put(9.5,22){\circle{0.5}}
    \put(10,22.5){\makebox(0,0)[bl]{\scriptsize $I$}}
  \end{picture} &

  %%%%%% Bild fuer 1 < p < oo, dim X = oo %%%%%%%%%
  \begin{picture}(60,62)
    \put(30,30){\oval(60,60)[]}
    \put(3,58){\makebox(0,0)[tl]{$\Ll(\XX)$}}
    \put(34,26){\oval(48,48)[]}
    \put(13,48){\makebox(0,0)[tl]{$\Ll(\XX,\Pp)$}}
    \put(38,22){\oval(36,36)[]}
    \put(23,38){\makebox(0,0)[tl]{$\Kk(\XX,\Pp)$}}
    \put(42,18){\oval(24,24)[]}
    \put(33,28){\makebox(0,0)[tl]{$\Kk(\XX)$}}
    \put(14.5,22){\circle{0.5}}
    \put(15,22.5){\makebox(0,0)[bl]{\scriptsize $I$}}
  \end{picture} \\

  \hline
  \begin{picture}(20,62)
    \put(10,31){\makebox(0,0)[]{$p=1,\infty$}}
  \end{picture} &

  %%%%%% Bild fuer p=oo, dim X < oo %%%%%%%%%
  \begin{picture}(60,62)
    \put(30,30){\oval(60,60)[]}
    \put(3,58){\makebox(0,0)[tl]{$\Ll(\XX)$}}
    \put(26,33){\oval(40,40)[]}
    \put(9,51){\makebox(0,0)[tl]{$\Ll(\XX,\Pp)$}}
    \put(40,20){\oval(35,35)[]}
    \put(54,4){\makebox(0,0)[br]{$\Kk(\XX)$}}
    \put(34,25){\makebox(0,0)[]{$\Kk(\XX,\Pp)$}}
    \put(14.5,30){\circle{0.5}}
    \put(15,30.5){\makebox(0,0)[bl]{\scriptsize $I$}}
  \end{picture} &

  %%%%%% Bild fuer p=oo, dim X = oo %%%%%%%%%
  \begin{picture}(60,62)
    \put(30,30){\oval(60,60)[]}
    \put(3,58){\makebox(0,0)[tl]{$\Ll(\XX)$}}
    \put(26,33){\oval(40,40)[]}
    \put(9,51){\makebox(0,0)[tl]{$\Ll(\XX,\Pp)$}}
    \put(42,18){\oval(31,31)[]}
    \put(54,4){\makebox(0,0)[br]{$\Kk(\XX)$}}
    \put(31,28){\oval(30,30)[]}
    \put(19,41){\makebox(0,0)[tl]{$\Kk(\XX,\Pp)$}}
    \put(11,30){\circle{0.5}}
    \put(11.5,30.5){\makebox(0,0)[bl]{\scriptsize $I$}}
  \end{picture} \\
  \hline
  \end{tabular}
\caption{The spaces $\Ll(\XX)$, $\Ll(\XX,\Pp)$, $\Kk(\XX,\Pp)$ and $\Kk(\XX)$ (the compact operators on $\XX$). See\ \cite{Lindner2006,Seidel2014}}
  \label{fig:venn_diagramm}
\end{figure}

Compact operators are exactly the operators which, when multiplied from the right, turn strong 
convergence into norm-convergence. This property is mimicked now in the $\Pp$-setting:
\begin{defini}[$\Pp$-strong convergence]
  Let $\Pp:=(P_n)_{n\in\NM}$ be an approximate identity. A sequence $(A_n)$ in $\Ll(\XX)$ \emph{converges $\Pp$-strongly} to $A\in \Ll(\XX)$ if
  \[\lim\limits_{n\to\infty}\norm{K(A_n-A)} + \norm{(A_n-A)K} = 0\]
  for all $K\in\Kk(\XX,\Pp)$. In this case, we write $A = \Plim A_n$ or $A_n\pto A$.
\end{defini}
%As $\Pp$ is an approximate identity the $\Pp$-limit of a sequence is unique.

$\Ll(\XX,\Pp)$ is the natural playground for the study of $\Pp$-strong convergence. For example, $\Pp$-strong convergence of a sequence $(A_n)$ from $\Ll(\XX,\Pp)$ implies that the sequence is bounded, the limit is unique and again in $\Ll(\XX,\Pp)$, see \cite[Section 1.1.4]{RabinovichRochSilbermann2004}.

For a sequence $(A_n)_{n\in\NM}$ in $\Ll(\XX,\Pp)$ we have $A = \Plim A_n$ if and only if $(A_n)$ is bounded and $$\lim\limits_{n\to\infty}\norm{P_m(A-A_n)} + \norm{(A-A_n)P_m}= 0$$
for all $m\in\NM$, see \cite[Proposition 1.1.4]{RabinovichRochSilbermann2004}.

\begin{rem}
\label{rem:P-topology}
  Let $\Pp:=(P_n)_{n\in\NM}$ be an approximate identity. For $n\in\NM$ define the seminorm $p_n: \Ll(\XX,\Pp)\to \RM$ by
  $$p_n(A) := \norm{P_n A} + \norm{AP_n},  \qquad A\in \Ll(\XX,\Pp).$$
  Then, on bounded subsets of $\Ll(\XX,\Pp)$, the family $(p_n)_{n\in\NM}$ of seminorms defines a metriziable topology, for which the $\Pp$-limit is the limit with respect to this metric.
\end{rem}

\begin{rem}
Let $\XX=\ell^p(\ZM^N,Y)$ with $1<p<\infty$ and $\dim Y <\infty$, and let $\Pp=(P_n)$, where $P_n$ is
the operator of multiplication by the characteristic function of $\{-n,...,n\}^N\subset\ZM^N$.
Then $\Ll(\XX,\Pp) = \Ll(\XX)$, $\Pp$-strong convergence is equivalent to $*$-strong convergence (that is strong convergence $A_n\to A$ and $A_n^*\to A^*$), and $\Pp$-Fredholmness coincides with Fredholmness.
\end{rem}

Historically, $\Pp$-theory has its roots in Simonenko \cite{Simonenko68}, started in the present form with 
Pr\"ossdorf, Roch and Silbermann \cite{ProssdorfSilbermann,RochSilbermann1989} and recently culminated
in Seidels work -- see \cite{Seidel2014} for his excellent survey.

%So, arguably, limit operators (at least since \cite{Muh3}) have always been defined with respect to $\pto$. It just happens that in the classical case, when $p\in(1,\infty)$ and $Y=\CM^M$, the $\Pp$-notions ($\Kk(\XX,\Pp)$, $\pto$ and $\Pp-$Fredholmness) coincide with their classical counter-parts.

~

{\bf Operators on discrete groups.}
Now we pass to $\ell^p$-spaces on a discrete group $\Gg$ rather than fixing $\Gg=\ZM^N$.
So let $p\in[1,\infty]$, $Y$ a Banach space, $(\Gg,+)$ a discrete group (not nessesarily abelian or finitely generated), and let $\XX:=\ell^p(\Gg,Y)$.

For $n\in\NM$ let $\Gg_n\subseteq \Gg$ be such that $\varnothing\neq \Gg_n \neq \Gg$, and such that for all $m\in\NM$ there exists $N_m$ satisfying $\Gg_m\subseteq \Gg_n$ for all $n\geq N_m$. Furthermore, suppose $\bigcup_{n\in\NM} \Gg_n = \Gg$. For $x\in\XX$ define
$$P_n(x_g)_{g\in \Gg} := (\1_{\Gg_n}(g) x_g)_{g\in \Gg}$$
where $\1_{\Gg_n}$ denotes the characteristic function of the set $\Gg_n$. Then $\Pp:=(P_n)_{n\in\NM}$ defines an approximate identity which can be checked by inspection.
%If $(\Gg_n)_{n\in\NM}$ is additionally strictly monotone increasing, then, $\Pp$ is a uniform approximate identity.\footnote{\tt Sonst doch auch, oder?!}
Furthermore, $\Pp$ is even uniform.
In the following we always suppose that $\Pp$ is defined by such an exhaustion defining a uniform approximate identity.

For $g\in \Gg$ and $x\in \XX$ let $V_gx:=(x_{h+g})_{h\in \Gg}$ denote the \emph{shift by $g$}. For a sequence $(g_n)_{n\in\NM}$ in $\Gg$ we write $g_n \to \infty$ if it leaves every finite set, i.e.,\ if for every finite set $F\subseteq \Gg$ there exists an $n_F\in\NM$ with $g_n\notin F$ for all $n\geq n_F$.
With this notation at hand we define limit operators: 
%playing a crucial role for our further considerations.

\begin{defini}[limit operators]
Let $A\in \Ll(\XX,\Pp)$ and $g:=(g_n)_{n\in\NM}$ in $\Gg$ with $g_n\to \infty$. Then
\[A_g\ :=\ \Plim_{n\to\infty} V_{g_n}AV_{-g_n},\]
if it exists, is called the \emph{limit operator of $A$ with respect to $g$}.
The set $\sop(A)$ of all limit operators of $A$ is called the \emph{operator spectrum of $A$}. An operator $A\in \Ll(\XX,\Pp)$ is called \emph{self-similar} if $A\in \sop(A)$.
\end{defini}

The importance of the operator spectrum becomes clear in the study of Fredholm and $\Pp$-Fredholm operators. For so-called rich band-dominated operators on $\XX$, which is a proper subalgebra of $\Ll(\XX,\Pp)$, we have, by \cite{Seidel2014,LindnerSeidel2014}, that
\[
A \textrm{ is Fredholm}\quad\Longrightarrow\quad
A \textrm{ is $\Pp-$Fredholm}\quad\iff\quad
\textrm{ all } B\in\sop(A) \textrm{ are invertible.}
\]
Both ``$\Longrightarrow$'' arrows already hold in $\Ll(\XX,\Pp)$, so that we have the following theorem:

\begin{theo}[see {\cite[Theorem 1.28]{SeidelSilbermann2012}}]
\label{thm:spectral_inclusion_groups} Let
$A \in \Ll(\XX,\Pp)$. Then
$$
\specess(A)\ \supseteq\ \underset{B\in\sop(A)}{\bigcup}\spec(B)
$$
holds. In particular,
$$
\spec(A)=\specess(A)
$$
holds if $A$ is self-similar.
\end{theo}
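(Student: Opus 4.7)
The plan is to prove the contrapositive: fix $\lambda \notin \specess(A)$, so $A - \lambda I$ is Fredholm, and aim to show that $B - \lambda I$ is invertible for every $B \in \sop(A)$. Since $A - \lambda I \in \Ll(\XX,\Pp)$, Corollary 12 of \cite{Seidel2014} (quoted just before this theorem) upgrades Fredholmness to $\Pp$-Fredholmness, producing $R \in \Ll(\XX,\Pp)$ and $K_1, K_2 \in \Kk(\XX,\Pp)$ with
$$R(A - \lambda I) = I + K_1 \qquad\text{and}\qquad (A - \lambda I) R = I + K_2.$$
This $\Pp$-Fredholm regularizer $R$ will serve as the seed from which to manufacture an inverse of $B - \lambda I$ ``at infinity''.

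Given $B = \Plim_n V_{g_n} A V_{-g_n}$ for some $g_n \to \infty$, I would conjugate the two identities above by $V_{g_n} \cdot V_{-g_n}$. Since the shifts are mutually inverse isometries, this produces
$$(V_{g_n} R V_{-g_n})\bigl(V_{g_n} A V_{-g_n} - \lambda I\bigr) = I + V_{g_n} K_1 V_{-g_n}$$
together with its right-hand analogue, and $(V_{g_n} R V_{-g_n})$ remains norm-bounded by $\norm{R}$ in $\Ll(\XX,\Pp)$. Using the metrizability of the $\Pp$-topology on bounded subsets of $\Ll(\XX,\Pp)$ (Remark~\ref{rem:P-topology}) together with a standard diagonal extraction, I would then pass to a subsequence along which $R' := \Plim_n V_{g_n} R V_{-g_n}$ exists in $\Ll(\XX,\Pp)$.

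The decisive input is the vanishing $V_{g_n} K V_{-g_n} \pto 0$ for every $K \in \Kk(\XX,\Pp)$. To establish it, I would approximate $K$ in norm by $P_N K P_N$ (possible by $\Pp$-compactness of $K$) and observe that $V_g P_N V_{-g}$ is the multiplication operator by $\1_{\Gg_N - g}$. For any fixed $m, N$, the set of those $g \in \Gg$ for which $\Gg_m \cap (\Gg_N - g) \neq \varnothing$ is the Minkowski difference $\Gg_N - \Gg_m$; under the standing finiteness of the exhaustion sets this is a finite subset of $\Gg$, so $g_n \to \infty$ forces $\Gg_m \cap (\Gg_N - g_n) = \varnothing$ eventually. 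Hence $\norm{P_m V_{g_n} P_N K P_N V_{-g_n}} = 0$ for $n$ large, and a symmetric argument handles the right side; combined with $\norm{V_{g_n} K V_{-g_n}} \leq \norm{K}$, Proposition 1.1.4 of \cite{RabinovichRochSilbermann2004} delivers the claimed $\Pp$-strong convergence to $0$. Taking $\Pp$-limits in the two conjugated identities---using compatibility of $\Pp$-strong convergence with multiplication by bounded factors (\cite[Sec.~1.1.4]{RabinovichRochSilbermann2004})---the right-hand sides tend to $I$ while the left-hand sides tend to $R'(B - \lambda I)$ and $(B - \lambda I) R'$, proving $B - \lambda I$ invertible. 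The ``in particular'' clause is then immediate: $A \in \sop(A)$ forces $\spec(A) \subseteq \bigcup_{B \in \sop(A)} \spec(B) \subseteq \specess(A) \subseteq \spec(A)$.

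The main obstacle will be the subsequence extraction for $(V_{g_n} R V_{-g_n})$: sequential $\Pp$-precompactness of norm-bounded subsets of $\Ll(\XX,\Pp)$ is not automatic, sits at the technical heart of limit operator theory, and hinges crucially on the uniformity of $\Pp$. Once this compactness is secured, the remainder of the argument is essentially algebra in the quotient $\Ll(\XX,\Pp)/\Kk(\XX,\Pp)$ glued together by the ``localization plus shift-to-infinity'' vanishing of conjugated $\Pp$-compacts.
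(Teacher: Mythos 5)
The paper offers no proof of this theorem: it is imported wholesale from \cite[Theorem 1.28]{SeidelSilbermann2012} together with the implication chain quoted from \cite{Seidel2014,LindnerSeidel2014}, so your attempt has to be measured against those references rather than against anything in the paper itself. Most of your skeleton is exactly the standard one: passing to the contrapositive, upgrading Fredholmness to $\Pp$-Fredholmness via \cite[Corollary 12]{Seidel2014}, conjugating the regularizer identities by $V_{g_n}\cdot V_{-g_n}$, and the key lemma that $V_{g_n}KV_{-g_n}$ converges $\Pp$-strongly to $0$ for every $K\in\Kk(\XX,\Pp)$ when $g_n\to\infty$. Your proof of that lemma (approximation by $P_NKP_N$, the finite-intersection argument for $\Gg_m\cap(\Gg_N-g_n)$, then \cite[Proposition 1.1.4]{RabinovichRochSilbermann2004}) is correct, granted the finiteness of the exhaustion sets $\Gg_n$, which the paper indeed needs but never states explicitly. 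The ``in particular'' clause is also handled correctly.

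The genuine gap is precisely the step you flag as ``the main obstacle'', and it cannot be repaired in the form you propose: the subsequential limit $R':=\Plim_k V_{g_{n_k}}RV_{-g_{n_k}}$ need not exist. Norm-bounded subsets of $\Ll(\XX,\Pp)$ are \emph{not} sequentially $\Pp$-precompact in the generality of this paper, and uniformity of $\Pp$ does not change that. For a concrete obstruction take $\XX=\ell^2(\ZM,Y)$ with $\dim Y=\infty$ and $R=\mathrm{diag}(R_k)_{k\in\ZM}$ with $\norm{R_j-R_k}=1$ for $j\neq k$ (say, projections onto distinct basis vectors of $Y$): $\Pp$-convergence of $V_{g_n}RV_{-g_n}$ along any subsequence would force norm convergence of the individual diagonal blocks $R_{k+g_{n_j}}$, which is impossible. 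Operators all of whose translates admit $\Pp$-convergent subsequences are exactly the \emph{rich} ones, and a regularizer of a $\Pp$-Fredholm operator has no reason to be rich; richness of everything in sight is automatic only for $\dim Y<\infty$ and $1<p<\infty$, which is exactly the classical situation this paper is trying to go beyond. Accordingly, the proof in \cite{SeidelSilbermann2012,RabinovichRochSilbermann2004} never forms a limit of $R_n:=V_{g_n}RV_{-g_n}$. Instead it reads off from $R_n(A_n-\lambda)=I+K_1^n$ the estimate $\norm{P_mx}\leq\norm{R}\,\norm{(A_n-\lambda)x}+\norm{P_mK_1^n}\,\norm{x}$, evaluates it on vectors $x=P_jy$ (for which $\norm{(A_n-B)x}\leq\norm{(A_n-B)P_j}\,\norm{y}\to 0$), lets $n\to\infty$ and then takes the supremum over $m$ using the approximate-identity property to obtain $\norm{x}\leq\norm{R}\,\norm{(B-\lambda)x}$; the symmetric estimate coming from $(A_n-\lambda)R_n=I+K_2^n$ supplies surjectivity, and together these give invertibility of $B-\lambda$ with $\norm{(B-\lambda)^{-1}}\leq\norm{R}$. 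So your reduction and your vanishing lemma are the right ingredients, but at the decisive step the argument must be run through norm inequalities rather than through a compactness extraction; as written, your proof does not close.
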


\section{Dynamical systems over discrete groups}
\label{sec:dynamical_systems}
\bigskip

Let $(\Gg,+)$ be a discrete group with countably many elements,
$\Omega$ a compact metric space, such that there exists a
representation $\alpha$ of $\Gg$ in $\{T:\Omega\to\Omega:\;
T\,\text{homeomorphism}\}$, i.e., $\alpha\colon \Gg\to \{T:\Omega\to\Omega:\;
T\,\text{homeomorphism}\}$ such that
\[\alpha(g+h) = \alpha(g)\circ\alpha(h) \quad(g,h\in \Gg).\]
Then we call $(\Omega,\Gg,\alpha)$ a \emph{dynamical system}.
Note that if $\Gg$ is abelian, then %, the representation yields
%\[\alpha(g)\circ \alpha(h) = \alpha(h)\circ\alpha(g),\quad g,h\in\Gg.\]
%Thus,
the homeomorphisms in the image of $\alpha$ commute.

\begin{defini}[limit sets]
Let $(\Omega,\Gg, \alpha)$ be a dynamical system.
For $\omega\in\Omega$ we write
\[L(\omega):=\{\nu\in\Omega:\; \exists\, (g_n)_{n\in\NM} \text{ in  } \Gg \text{ s.t. }  g_n\to \infty \text{ and } \alpha(g_n)(\omega)\to \nu\},\]
the \emph{limit set of $\omega$}.
\end{defini}

\begin{lemma}\label{lem:properties-L-omega}
Let $(\Omega,\Gg, \alpha)$ be a dynamical system, $\omega\in\Omega$. Then $L(\omega)$ is compact, non-empty and
$\alpha$-invariant.
\end{lemma}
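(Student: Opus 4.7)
The plan is to verify the three claims in turn, with the standing (and, for the first claim, essential) assumption that $\Gg$ is infinite, so that sequences $g_n\to\infty$ in $\Gg$ exist.

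\textbf{Non-emptiness.} Since $\Gg$ is countably infinite, I can fix any enumeration $(f_n)_{n\in\NM}$ of $\Gg$ and set $g_n:=f_n$; then $g_n\to\infty$, because for any finite $F\subseteq\Gg$ only finitely many enumeration indices land in $F$. The sequence $(\alpha(g_n)(\omega))_{n\in\NM}$ lives in the compact metric space $\Omega$, so it has a convergent subsequence $\alpha(g_{n_k})(\omega)\to\nu$. The subsequence $(g_{n_k})$ still tends to infinity, so $\nu\in L(\omega)$.

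\textbf{Compactness.} Since $\Omega$ is compact, it suffices to show that $L(\omega)$ is closed. Let $(\nu_k)_{k\in\NM}$ be a sequence in $L(\omega)$ with $\nu_k\to\nu\in\Omega$, and fix a metric $d$ on $\Omega$. For each $k$ choose a sequence $(g_n^{(k)})_{n\in\NM}$ in $\Gg$ with $g_n^{(k)}\to\infty$ and $\alpha(g_n^{(k)})(\omega)\to\nu_k$. Using the fixed enumeration $(f_n)_{n\in\NM}$ of $\Gg$, pick $n_k$ large enough so that simultaneously
\[d\bigl(\alpha(g_{n_k}^{(k)})(\omega),\nu_k\bigr)<\tfrac{1}{k}\qquad\text{and}\qquad g_{n_k}^{(k)}\notin\{f_1,\dots,f_k\};\]
both conditions can be met because $g_n^{(k)}\to\infty$ and $\alpha(g_n^{(k)})(\omega)\to\nu_k$. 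Setting $h_k:=g_{n_k}^{(k)}$, the second condition forces $h_k\to\infty$, while the triangle inequality gives $d(\alpha(h_k)(\omega),\nu)\le \tfrac{1}{k}+d(\nu_k,\nu)\to 0$. Hence $\nu\in L(\omega)$, so $L(\omega)$ is closed.

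\textbf{$\alpha$-invariance.} Let $\nu\in L(\omega)$ and $g\in\Gg$; I must show $\alpha(g)(\nu)\in L(\omega)$. Choose $(g_n)_{n\in\NM}$ in $\Gg$ with $g_n\to\infty$ and $\alpha(g_n)(\omega)\to\nu$, and set $h_n:=g+g_n$. For any finite $F\subseteq\Gg$ the set $(-g)+F$ is finite, so eventually $g_n\notin(-g)+F$, i.e.\ $h_n\notin F$; hence $h_n\to\infty$. By the homomorphism property of $\alpha$ and continuity of $\alpha(g)$,
\[\alpha(h_n)(\omega)\ =\ \alpha(g)\bigl(\alpha(g_n)(\omega)\bigr)\ \longrightarrow\ \alpha(g)(\nu),\]
so $\alpha(g)(\nu)\in L(\omega)$.

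The only mildly delicate point is the diagonal extraction in the compactness step; everything else is a direct translation of the definition together with continuity of $\alpha(g)$ and the fact that adding a fixed group element to a sequence going to infinity preserves that property.
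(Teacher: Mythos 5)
Your proof is correct, but it takes a different route from the paper. The paper avoids all sequence extraction by writing $L(\omega)$ as the nested intersection $\bigcap_{F}L_F(\omega)$ over finite $F\subseteq\Gg$, where $L_F(\omega):=\overline{\{\alpha(g)(\omega):g\notin F\}}$: compactness and non-emptiness then fall out of the finite intersection property of compact sets (each $L_F(\omega)$ is a non-empty closed subset of the compact $\Omega$, and finitely many of them always intersect because $\Gg$ is infinite), and $\alpha$-invariance follows from the re-indexing $\bigcap_F L_F(\omega)=\bigcap_F L_{F+g}(\omega)$. You instead work directly with the sequential definition: an enumeration of $\Gg$ for non-emptiness, a diagonal extraction for closedness, and a direct computation with $h_n:=g+g_n$ for invariance (correctly handled even in the non-abelian case, since $h_n\in F$ iff $g_n\in(-g)+F$). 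The paper's argument is shorter and sidesteps the diagonal step, but it silently uses the identification $L(\omega)=\bigcap_F L_F(\omega)$, which is itself a small (standard) lemma in a metric space; your version proves everything from the definition at the cost of the slightly more delicate diagonal argument, which you execute correctly by forcing $h_k\notin\{f_1,\dots,f_k\}$ along a fixed enumeration. Both proofs are complete and elementary.
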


\begin{proof} For a finite set $F\subseteq \Gg$ we define
$$L_F (\omega):=\overline{\{ \alpha
(g) (\omega) : g\notin F\}}.$$
Then $L_F (\omega)$ is closed, and
hence, compact. Moreover, as $\Gg$ is infinite every finite
intersection of sets of the form $L_F (\omega)$, $F\subseteq \Gg$, is
non-empty. Thus, by compactness, the infinite intersection
$$L(\omega) =\bigcap_{F \subseteq \Gg, F \,\text{finite}} L_F (\omega)$$
of compact sets is compact and non-empty. Moreover, for each $g\in \Gg$ the equation
$$\bigcap_{F \subseteq \Gg, F \,\text{finite}} L_F (\omega) = \bigcap_{F \subseteq \Gg, F \,\text{finite}} L_{F+g}
(\omega)$$ holds and so the $\alpha$-invariance of $L(\omega)$ follows.
\end{proof}

\begin{defini}[Pseudoergodic elements] Let $(\Omega,\Gg, \alpha)$ be a dynamical system.
Then  $\omega\in\Omega$  is called \textit{pseudoergodic} if
$L(\omega) = \Omega$ holds. The set of pseudoergodic elements is denoted by $\Omega_\PE$.
\end{defini}

\begin{defini}[Minimality] Let $(\Omega,\Gg, \alpha)$ be a dynamical system.
For $\omega\in\Omega$ we define the \emph{orbit} of $\omega$ by
\[\Orb(\omega):=\{\alpha(g)(\omega):\; g\in\Gg\}.\]
We say that $(\Omega,\Gg,\alpha)$ is \emph{minimal} if $\Orb(\omega)$ is dense in $\Omega$ for every $\omega\in\Omega$.
\end{defini}

Clearly, pseudoergodic elements have dense orbits in $\Omega$. We next show the converse under an additional assumption.

\begin{lemma}
\label{lem:pseudo-ergodic}
Let $(\Omega,\Gg, \alpha)$ be a dynamical system, $\omega\in\Omega$ not isolated and $\Orb(\omega)$ dense in $\Omega$.
Then $\omega$ is pseudoergodic.
\end{lemma}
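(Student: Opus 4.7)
The plan is to prove the contrapositive-flavored statement: assuming density of the orbit, I will show that every $\nu\in\Omega$ lies in $L(\omega)$, and the only obstruction to this — namely that all orbit sequences approaching $\nu$ might stay in a finite set of indices — is ruled out by the non-isolation assumption on $\omega$.

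First I would unwind the definition of $L(\omega)$: a point $\nu$ fails to lie in $L(\omega)$ precisely when there exist an open neighborhood $U$ of $\nu$ and a finite set $F\subseteq\Gg$ such that $\alpha(g)(\omega)\notin U$ for all $g\in\Gg\setminus F$. Fix such a hypothetical $\nu\notin L(\omega)$, together with $U$ and $F$. Then the intersection $U\cap\Orb(\omega)$ is contained in the finite set $\{\alpha(g)(\omega):g\in F\}$, so only finitely many orbit points sit inside $U$.

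Next, using the Hausdorff property of $\Omega$, I shrink $U$ to a smaller open neighborhood $U'\ni\nu$ that avoids every point of $\{\alpha(g)(\omega):g\in F\}\setminus\{\nu\}$. This way $U'\cap\Orb(\omega)\subseteq\{\nu\}$. Density of $\Orb(\omega)$ forces $U'\cap\Orb(\omega)\neq\emptyset$, so in fact $\nu\in\Orb(\omega)$, say $\nu=\alpha(g_0)(\omega)$, and $U'\cap\Orb(\omega)=\{\nu\}$. Applying density again to the open set $U'\setminus\{\nu\}$ yields that this set must be empty — because if it were non-empty it would have to meet $\Orb(\omega)$, contradicting $U'\cap\Orb(\omega)=\{\nu\}$. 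Hence $\{\nu\}=U'$ is open, so $\nu$ is an isolated point of $\Omega$.

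The final step is to transport this isolation back to $\omega$: since $\alpha(g_0)$ is a homeomorphism of $\Omega$ and $\alpha(g_0)(\omega)=\nu$ is isolated, the preimage $\{\omega\}=\alpha(-g_0)(\{\nu\})$ is open, contradicting the hypothesis that $\omega$ is not isolated. Thus no such $\nu$ exists, which means $L(\omega)=\Omega$ and $\omega$ is pseudoergodic. The main subtlety — and really the only non-formal step — is separating the two situations ``$\nu$ is approached only by orbit points with bounded group labels'' versus ``$\nu$ itself is the orbit point''; the non-isolation hypothesis is exactly what kills the second case via the homeomorphism property of $\alpha(g_0)$.
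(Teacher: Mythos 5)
Your proof is correct, but it takes a genuinely different route from the paper's. The paper first reduces the claim to showing $\omega\in L(\omega)$: since $L(\omega)$ is closed and $\alpha$-invariant (Lemma~\ref{lem:properties-L-omega}), once it contains $\omega$ it contains $\overline{\Orb(\omega)}=\Omega$. It then uses non-isolation to pick pairwise distinct $\omega_n\to\omega$ with $\omega_n\neq\omega$, approximates each $\omega_n$ by an orbit point $\alpha(g_n)(\omega)$ within $\tfrac13 d(\omega,\omega_n)$, and checks via the metric that the $g_n$ must leave every finite set. You instead argue by contradiction at an arbitrary $\nu$: if $\nu\notin L(\omega)$, only finitely many orbit points enter some neighborhood of $\nu$, density then forces $\nu$ to be an isolated point lying on the orbit, and pulling back along the homeomorphism $\alpha(-g_0)$ makes $\omega$ isolated. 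Your argument is purely topological (Hausdorff/T$_1$ suffices once one has the characterization of $L(\omega)$ as $\bigcap_F \overline{\{\alpha(g)(\omega):g\notin F\}}$, which the paper establishes in the proof of Lemma~\ref{lem:properties-L-omega} and which you should cite rather than call ``unwinding the definition,'' since the definition itself is sequential), it treats all $\nu$ simultaneously, and it avoids invoking $\alpha$-invariance of $L(\omega)$; the paper's argument is more constructive but leans on the metric and on the structural properties of $L(\omega)$. Both are complete; the one genuinely non-formal point in each is the same, namely that finitely many group elements contribute only finitely many orbit points near a given target, and non-isolation is what excludes the degenerate case where that target is the orbit point itself.
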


\begin{proof}
As $\Orb(\omega)$ is dense and $L(\omega)$ is closed and $\alpha$-invariant it suffices to show that it contains
$\omega$. Since $\omega$ is not isolated, there exists a sequence $(\omega_n)$ in $\Omega$
converging to $\omega$ such that the $\omega_n$ are pairwise
different and none of them equals $\omega$. Let $d$ denote a metric on $\Omega$ generating the topology.
By the denseness of the orbit $\Orb(\omega)$, for each $n\in\NM$ there exists $g_n\in \Gg$ with $ d(\alpha(g_n)(\omega), \omega_n) \leq \frac{1}{3} d(\omega, \omega_n)$. The assumption
on the $(\omega_n)$ provides that the set $\{g_n : n\in \NM\}$ is
infinite. Thus, $(g_n)_{n\in\NM}$ leaves every finite set and the equations
$$\lim_{n\to \infty} \alpha(g_{n})(\omega) = \lim_{n\to
\infty} \omega_{n}  = \omega$$
hold, proving $\omega\in L(\omega)$.
\end{proof}

\begin{proposi}
\label{prop:minimal_all_elements_pseudo}
  Let $(\Omega,\Gg,\alpha)$ be a dynamical system.
  The following assertions are equivalent:
  \begin{itemize}
   \item[$(i)$] $(\Omega,\Gg,\alpha)$ is minimal.
   \item[$(ii)$] $\Omega = \Omega_{\PE}$.
   \item[$(iii)$] $\Omega_{\PE}$ is closed and non-empty.
  \end{itemize}
\end{proposi}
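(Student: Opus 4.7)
The plan is to establish the cyclic chain $(i) \Rightarrow (ii) \Rightarrow (iii) \Rightarrow (i)$. The implication $(ii) \Rightarrow (iii)$ is immediate, since $\Omega$ itself is closed and non-empty.

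For $(iii) \Rightarrow (i)$, I would first verify that $\Omega_\PE$ is $\alpha$-invariant: for any $h \in \Gg$, the reparametrization $g_n \mapsto g_n + h$ preserves the property $g_n \to \infty$, so $L(\alpha(h)(\omega)) = L(\omega)$, and pseudoergodicity is preserved under the action. Then, picking any $\omega_0 \in \Omega_\PE$, its orbit is contained in $\Omega_\PE$. Since $L(\omega_0) = \Omega$, every $\omega \in \Omega$ is the limit of a sequence $\alpha(g_n)(\omega_0)$ whose terms all lie in $\Omega_\PE$, and closedness of $\Omega_\PE$ forces $\omega \in \Omega_\PE$. Hence $\Omega = \Omega_\PE$, and from the trivial inclusion $L(\omega) \subseteq \overline{\Orb(\omega)}$ every orbit is dense, giving minimality.

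For $(i) \Rightarrow (ii)$, fix $\omega \in \Omega$; by minimality $\Orb(\omega)$ is dense in $\Omega$. If $\omega$ is not isolated, Lemma~\ref{lem:pseudo-ergodic} yields $\omega \in \Omega_\PE$ directly. The main obstacle is the isolated case, which the lemma does not cover. The rescue is the following rigidity: if $\omega$ is isolated then $\{\omega\}$ is open, so density of every orbit forces $\omega \in \Orb(\nu)$ for each $\nu \in \Omega$, i.e.\ $\Omega = \Orb(\omega)$. Because each $\alpha(g)$ is a homeomorphism, it preserves isolation, so every point of $\Omega$ is isolated; compactness of $\Omega$ then makes it finite. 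The orbit--stabilizer relation $|\Gg/\mathrm{Stab}(\omega)| = |\Omega| < \infty$ combined with $|\Gg| = \infty$ makes $\mathrm{Stab}(\omega)$ infinite. For any $\nu = \alpha(h)(\omega) \in \Omega$, the left coset $h \cdot \mathrm{Stab}(\omega)$ is infinite, so one can enumerate it as a sequence $g_n \to \infty$ in $\Gg$; since $\alpha(g_n)(\omega) = \nu$ for every $n$, we get $\nu \in L(\omega)$, and hence $L(\omega) = \Omega$.

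The principal obstacle is the isolated-point case in $(i) \Rightarrow (ii)$, where Lemma~\ref{lem:pseudo-ergodic} is unavailable; the resolution rests on the observation that minimality together with a single isolated point collapses $\Omega$ to a finite orbit, after which orbit--stabilizer theory furnishes the infinite stabilizer needed for an explicit coset-based realization of each target point.
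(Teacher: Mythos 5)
Your proof is correct, and two of the three implications coincide with the paper's argument: $(ii)\Rightarrow(iii)$ is the same triviality, and your $(iii)\Rightarrow(i)$ (invariance of $\Omega_{\PE}$, hence $\Orb(\omega_0)\subseteq\Omega_{\PE}$ for a pseudoergodic $\omega_0$, density of that orbit plus closedness of $\Omega_{\PE}$, then $L(\omega)\subseteq\overline{\Orb(\omega)}$) is exactly the paper's reasoning. The genuine divergence is in $(i)\Rightarrow(ii)$. The paper never touches Lemma~\ref{lem:pseudo-ergodic} here: it applies Lemma~\ref{lem:properties-L-omega} to see that $L(\omega)$ is non-empty, closed and $\alpha$-invariant, hence contains the closure of the orbit of any of its points, and minimality makes that closure all of $\Omega$ --- one line, no case distinction on whether $\omega$ is isolated. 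You instead route the argument through Lemma~\ref{lem:pseudo-ergodic}, which forces you to handle the isolated case separately; your treatment of it (one isolated point plus minimality collapses $\Omega$ to a single orbit, homeomorphisms propagate isolation, compactness gives finiteness, and orbit--stabilizer with $\abs{\Gg}=\infty$ --- an assumption the paper also uses, e.g.\ in the proof of Lemma~\ref{lem:properties-L-omega} --- yields an infinite coset $h+\mathrm{Stab}(\omega)$ of elements all sending $\omega$ to a prescribed $\nu$, hence a sequence tending to infinity realizing $\nu\in L(\omega)$) is correct and self-contained. What the paper's approach buys is brevity and uniformity; what yours buys is an explicit structural picture of minimal systems containing an isolated point (they are finite orbits with infinite stabilizer), which is a worthwhile observation but not needed for the proposition.
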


\begin{proof}
  To prove the implication $(i)\Rightarrow (ii)$ we choose $\omega \in \Omega$ and show $L(\omega) = \Omega$.
  By Lemma~\ref{lem:properties-L-omega}, $L(\omega)$ is non-empty, closed and
  $\alpha$-invariant. Hence, it contains at least the closure of one orbit.
  Thus, by minimality it equals $\Omega$. The implication $(ii)\Rightarrow (iii)$ is clear. Finally, $(iii)\Rightarrow (i)$ can be proven as follows. Let $\omega\in\Omega_\PE$. Since $\Omega_\PE$ is $\alpha$-invariant we observe $\Orb(\omega)\subseteq \Omega_\PE$. Since $\Orb(\omega)$ is dense and $\Omega_\PE$ is closed we obtain
  $\Omega = \Omega_\PE$, so every element is pseudoergodic and therefore has a dense orbit.
\end{proof}

\section{Operators over dynamical systems}
\label{sec:operator_over_DS}
\bigskip

In this section we define operators parametrized by dynamical systems and study their spectral theory.

\begin{defini}
\label{def:family_of_operators}
Let $(\Omega,\Gg,\alpha)$ be a dynamical system.
Let $A:\Omega\to \Ll(\XX,\Pp)$ such that
\begin{itemize}
\item[$(i)$] $A(\alpha(g)(\omega))=V_{g}A(\omega)V_{-g}$ for all $\omega\in\Omega$, $g\in \Gg$.\hfill\textit{(Equivariance)}
\item[$(ii)$] The map $\omega\mapsto A(\omega)$ is $\Pp$-continuous, i.e.,\ $\omega_n\to \omega$ implies \hfill\textit{(Continuity)}
$$\Plim A(\omega_n)= A(\omega).$$
\end{itemize}
Then $A$ is called a \emph{family of operators over $(\Omega,\Gg,\alpha)$}.
\end{defini}

Let $(\Omega,\Gg,\alpha)$ be a dynamical system and $A:\Omega\to \Ll(\XX,\Pp)$ be a family of operators over $(\Omega,\Gg,\alpha)$.
If $A(\Omega)$ is bounded then the continuity of $A(\cdot)$ as in $(ii)$ is exactly the continuity with respect to the topology as in Remark \ref{rem:P-topology}.

\begin{proposi}
\label{prop:operator_spectra}
Let $(\Omega,\Gg,\alpha)$ be a dynamical system,
$A:\Omega\to \Ll(\XX,\Pp)$ a family of
operators over $(\Omega,\Gg,\alpha)$ and $\omega\in\Omega$. Then
$$
\sop(A(\omega))=\left\{ A(\nu)\;:\; \nu\in L(\omega) \right\}
$$
for all $\omega\in\Omega$.
\end{proposi}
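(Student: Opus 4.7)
The plan is to exploit the equivariance identity to rewrite translates of $A(\omega)$ as values of $A$ at translated points, and then to use the $\Pp$-continuity together with compactness of $\Omega$ to translate back and forth between convergence in $\Omega$ and $\Pp$-convergence of operators. The key observation is that for any sequence $(g_n)$ in $\Gg$, property $(i)$ of Definition~\ref{def:family_of_operators} gives
\[
V_{g_n} A(\omega) V_{-g_n}\ =\ A(\alpha(g_n)(\omega)),
\]
so every translate that appears in the definition of a limit operator is automatically a value of $A$ at some point of the orbit.

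For the inclusion ``$\supseteq$'', I would take $\nu \in L(\omega)$ and pick $(g_n)$ in $\Gg$ with $g_n\to\infty$ and $\alpha(g_n)(\omega)\to\nu$. By continuity $(ii)$ we have $\Plim A(\alpha(g_n)(\omega)) = A(\nu)$, and by the equivariance identity above this reads $\Plim V_{g_n} A(\omega) V_{-g_n} = A(\nu)$, showing $A(\nu)\in \sop(A(\omega))$.

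For the converse inclusion ``$\subseteq$'', I would start with $B\in\sop(A(\omega))$ and a sequence $(g_n)$ with $g_n\to\infty$ such that $V_{g_n} A(\omega) V_{-g_n}\pto B$. Again the equivariance rewrites this as $A(\alpha(g_n)(\omega))\pto B$. Since $\Omega$ is compact metric, I would pass to a subsequence $(g_{n_k})$ such that $\alpha(g_{n_k})(\omega)\to\nu$ in $\Omega$ for some $\nu\in\Omega$; because $g_{n_k}\to\infty$, the limit point $\nu$ lies in $L(\omega)$ by definition of the limit set. Applying $\Pp$-continuity of $A$ to the convergent subsequence gives $A(\alpha(g_{n_k})(\omega))\pto A(\nu)$, while uniqueness of the $\Pp$-limit (valid on $\Ll(\XX,\Pp)$, cf.\ Section~\ref{sec:P-theory}) forces $B = A(\nu)$.

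The proof is short and the only mild subtlety is the last uniqueness step: one has to know that the full-sequence $\Pp$-limit $B$ and the subsequential $\Pp$-limit $A(\nu)$ must coincide as elements of $\Ll(\XX,\Pp)$, which is exactly the uniqueness of the $\Pp$-limit for sequences in $\Ll(\XX,\Pp)$ recalled after the definition of $\Pp$-strong convergence. Apart from that, the argument is essentially a translation between dynamics on $\Omega$ and limit operators via the equivariance and $\Pp$-continuity of the family.
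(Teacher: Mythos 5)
Your proof is correct and follows essentially the same route as the paper's: equivariance to rewrite translates as $A(\alpha(g_n)(\omega))$, $\Pp$-continuity for ``$\supseteq$'', and compactness of $\Omega$ plus a convergent subsequence and uniqueness of the $\Pp$-limit for ``$\subseteq$''. The paper leaves the final uniqueness step implicit, which you correctly identify and justify.
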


\begin{proof}
We first show ``$\subseteq$''. Let $\omega\in \Omega$ and $B\in\sop(A(\omega))$.
Then there exists a sequence $(g_n)_{n\in\NM}$ in $\Gg$
with $g_n\to\infty$, such that
$$
B=\Plim
V_{g_n} A(\omega) V_{-g_n} =\Plim A(\alpha(g_n)(\omega)).
$$
Since $g_n\to\infty$ and $\Omega$ is compact we can select a subsequence
$(g_{n_k})_{k\in\NM}$ such that $(\alpha(g_{n_k})(\omega))_{k\in\NM}$ is
convergent to $\nu\in \Omega$.
Thus, $\nu \in L(\omega)$. Using
the $\Pp$-continuity of $A(\cdot)$ we obtain
$B = A(\nu)$.

\medskip

We now turn to prove the converse inclusion ``$\supseteq$''. For $\omega\in\Omega$ and $\nu \in L(\omega)$,
there is a sequence $(g_n)_{n\in\NM}$ such that $g_n\to\infty$ and $\lim_{n\to\infty} \alpha(g_n)(\omega)=\nu$.
Then the $\Pp$-continuity of $A(\cdot)$
implies
$$
A(\nu)=\Plim
A(\alpha(g_n)(\omega))= \Plim V_{g_n}A(\omega)V_{-g_n},
$$
leading to $A(\nu)\in\sop(A(\omega))$.
\end{proof}

\begin{coro}
\label{cor:operator_spectrum_pseudoergodic}
Let $(\Omega,\Gg, \alpha)$ be a dynamical system and $A:\Omega\to \Ll(\XX,\Pp)$ a family of operators over $(\Omega,\Gg,\alpha)$. Then
$$
\sop(A(\omega)) = \left\{ A(\nu)\;:\; \nu\in \Omega \right\}
$$
for all $\omega\in \Omega_\PE$. In particular, $A(\omega)$ is self-similar for each $\omega\in \Omega_\PE$.
\end{coro}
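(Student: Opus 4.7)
The plan is to derive this corollary almost immediately from Proposition~\ref{prop:operator_spectra} combined with the definition of $\Omega_{\PE}$. First, I would invoke Proposition~\ref{prop:operator_spectra} for the given $\omega$, which yields
\[
\sop(A(\omega)) \;=\; \{A(\nu) : \nu \in L(\omega)\}.
\]
Then, since $\omega \in \Omega_{\PE}$, the defining property of pseudoergodicity gives $L(\omega) = \Omega$. Substituting this equality into the right-hand side produces the claimed identity $\sop(A(\omega)) = \{A(\nu) : \nu \in \Omega\}$.

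For the self-similarity assertion, I would note that $\omega \in \Omega = L(\omega)$, so taking $\nu = \omega$ in the identity above shows that $A(\omega)$ itself belongs to $\{A(\nu) : \nu \in L(\omega)\} = \sop(A(\omega))$. By the definition of self-similarity, this is precisely the statement that $A(\omega)$ is self-similar.

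There is no real obstacle here, since every piece of work has been done in the preceding proposition and definitions. The only mild subtlety worth a sentence is to confirm that $\omega \in L(\omega)$ genuinely follows from $L(\omega) = \Omega$; this is trivial because $\omega$ is an element of $\Omega$, and no non-isolatedness assumption is needed once the equality $L(\omega) = \Omega$ is granted as a hypothesis.
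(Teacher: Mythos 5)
Your proposal is correct and follows exactly the paper's own argument: apply Proposition~\ref{prop:operator_spectra} and substitute $L(\omega)=\Omega$ from the definition of pseudoergodicity, with self-similarity following since $\omega\in\Omega=L(\omega)$. Nothing further is needed.
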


\begin{proof}
Since $\omega$ is pseudoergodic we have $L(\omega) = \Omega$. Thus, the assertion follows from the previous Proposition~\ref{prop:operator_spectra}.
\end{proof}

\begin{theo}
\label{thm:constant_spectra_pseudo-ergodic_groups}
Let $(\Omega,\Gg, \alpha)$ be a dynamical system and $A:\Omega\to \Ll(\XX,\Pp)$ a family of operators over $(\Omega,\Gg,\alpha)$.
Set
$$
\Sigma:=\underset{\omega\in \Omega}{\bigcup}\spec(A(\omega)).
$$
Then
$$
\spec(A(\omega))=\specess(A(\omega))=\Sigma
$$
for all $\omega\in \Omega_{\PE}$.
\end{theo}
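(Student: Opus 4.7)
The proof should follow almost directly from the two ingredients assembled in the preceding sections: the description of the operator spectrum for pseudoergodic elements (Corollary~\ref{cor:operator_spectrum_pseudoergodic}) and the spectral inclusion together with self-similarity (Theorem~\ref{thm:spectral_inclusion_groups}). I would fix an arbitrary $\omega \in \Omega_{\PE}$ and chain together the three statements $\spec(A(\omega)) \subseteq \Sigma$, $\Sigma \subseteq \specess(A(\omega))$, and $\specess(A(\omega)) \subseteq \spec(A(\omega))$.

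The first inclusion $\spec(A(\omega)) \subseteq \Sigma$ is immediate, since $\omega \in \Omega_{\PE} \subseteq \Omega$ so $\spec(A(\omega))$ is one of the sets in the union defining $\Sigma$. The third inclusion $\specess(A(\omega)) \subseteq \spec(A(\omega))$ holds for any bounded operator (the essential spectrum is a subset of the spectrum). The crucial middle inclusion uses Corollary~\ref{cor:operator_spectrum_pseudoergodic}: since $\omega$ is pseudoergodic, $L(\omega) = \Omega$, so
\[
\sop(A(\omega)) \;=\; \{A(\nu) : \nu \in \Omega\}.
\]
Plugging this into Theorem~\ref{thm:spectral_inclusion_groups} gives
\[
\specess(A(\omega)) \;\supseteq\; \bigcup_{B \in \sop(A(\omega))} \spec(B) \;=\; \bigcup_{\nu \in \Omega} \spec(A(\nu)) \;=\; \Sigma.
\]
Combining the three inclusions yields $\spec(A(\omega)) = \specess(A(\omega)) = \Sigma$.

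There is no real obstacle here; the theorem is essentially a packaging of the preceding results. The only thing to mention explicitly (for the equality $\spec = \specess$) is that Corollary~\ref{cor:operator_spectrum_pseudoergodic} also yields self-similarity of $A(\omega)$ for $\omega \in \Omega_{\PE}$, so the ``in particular'' part of Theorem~\ref{thm:spectral_inclusion_groups} applies. Since the two sided inclusion $\spec(A(\omega)) \subseteq \Sigma \subseteq \specess(A(\omega)) \subseteq \spec(A(\omega))$ forces all three sets to coincide, the self-similarity statement is actually redundant once we have the middle inclusion, but invoking it gives the cleanest presentation.
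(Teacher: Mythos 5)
Your proposal is correct and is essentially identical to the paper's own proof, which establishes the same chain of inclusions $\Sigma\supseteq \spec(A(\omega)) \supseteq \specess(A(\omega))\supseteq\bigcup_{B\in\sop(A(\omega))}\spec(B) = \bigcup_{\nu\in\Omega}\spec(A(\nu)) = \Sigma$ using Theorem~\ref{thm:spectral_inclusion_groups} and Corollary~\ref{cor:operator_spectrum_pseudoergodic}. Your observation that the self-similarity clause is redundant once the middle inclusion is in place is also accurate.
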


\begin{proof}
  For pseudoergodic $\omega\in\Omega$ we have, taking into account Theorem \ref{thm:spectral_inclusion_groups} and Corollary \ref{cor:operator_spectrum_pseudoergodic},
  \[\Sigma\supseteq \spec(A(\omega)) \supseteq \specess(A(\omega))\supseteq\underset{B\in\sop(A)}{\bigcup}\spec(B)  \supseteq\underset{\nu\in\Omega}{\bigcup}\spec(A(\nu)) = \Sigma. \qedhere\]
\end{proof}

\begin{coro}
\label{cor:constant_spectra_minimal_groups}
Let $(\Omega,\Gg,\alpha)$ be a minimal dynamical system and $A:\Omega\to \Ll(\XX,\Pp)$ a family of operators over $(\Omega,\Gg,\alpha)$.
Set
$$
\Sigma:=\underset{\omega\in\Omega}{\bigcup}\spec(A(\omega)).
$$
Then
$$
\spec(A(\omega))=\specess(A(\omega))=\Sigma
$$
for all $\omega\in \Omega$.
\end{coro}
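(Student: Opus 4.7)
The plan is to deduce this corollary directly from Theorem \ref{thm:constant_spectra_pseudo-ergodic_groups} by upgrading the set of pseudoergodic elements from ``possibly proper subset'' to ``all of $\Omega$''. The bridge is Proposition \ref{prop:minimal_all_elements_pseudo}, which characterizes minimality by $\Omega = \Omega_\PE$.

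First, I would invoke Proposition \ref{prop:minimal_all_elements_pseudo}, specifically the implication $(i)\Rightarrow (ii)$: since $(\Omega,\Gg,\alpha)$ is assumed minimal, every $\omega\in\Omega$ is pseudoergodic, i.e., $\Omega=\Omega_\PE$. Then I would simply apply Theorem \ref{thm:constant_spectra_pseudo-ergodic_groups} to the family $A$: for every $\omega\in\Omega_\PE$ we already have
\[
\spec(A(\omega))=\specess(A(\omega))=\Sigma.
\]
Since this identity now holds for \emph{every} $\omega\in\Omega$, the corollary follows at once. No further work with limit operators, $\Pp$-continuity, or equivariance is needed here, as those ingredients have already been absorbed into the pseudoergodic version of the theorem.

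There is no real obstacle to speak of: the corollary is essentially a restatement of Theorem \ref{thm:constant_spectra_pseudo-ergodic_groups} under the stronger hypothesis of minimality, with Proposition \ref{prop:minimal_all_elements_pseudo} providing the one-line translation. If anything, the only thing worth flagging is that the definition of $\Sigma$ is the same in both statements (a union over all of $\Omega$), so no reindexing is required when passing from the pseudoergodic version to the minimal version.
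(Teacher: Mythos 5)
Your proposal is correct and coincides with the paper's own proof: both invoke Proposition \ref{prop:minimal_all_elements_pseudo} to get $\Omega_{\PE}=\Omega$ from minimality and then apply Theorem \ref{thm:constant_spectra_pseudo-ergodic_groups}. Nothing further is needed.
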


\begin{proof}
According to Proposition~\ref{prop:minimal_all_elements_pseudo} minimality implies $\Omega_{\PE} = \Omega$. Thus, the statement
follows directly from the previous Theorem~\ref{thm:constant_spectra_pseudo-ergodic_groups}.
\end{proof}

As we have seen, the spectrum of a family of operators over a minimal dynamical system is constant.
Next, we show that in this setup also the norms of the resolvents are equal for all operators of the family.
While this is an immediate consequence of spectral constancy in case of selfadjoint operators, in the non-selfadjoint case this might be a bit surprising.
We will need a little preparation.

\begin{lemma}
\label{lem:limit_operator_norm}
  Let $A\in \Ll(\XX,\Pp)$, $B\in \sop(A)$. Then $\|B\|\leq \|A\|$.
\end{lemma}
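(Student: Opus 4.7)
The plan is to combine two features of the setup. First, the shift operators $V_g$ on $\XX = \ell^p(\Gg, Y)$ are invertible isometries (with inverse $V_{-g}$), so conjugation by $V_g$ preserves the operator norm. Second, the approximate identity $\Pp = (P_n)$ fixed earlier consists of coordinate projections $P_n$ of norm at most one, which asymptotically exhaust $\XX$ in the sense that $\sup_n \|P_n y\| \geq \|y\|$ for every $y \in \XX$. Writing $B = \Plim_n A_n$ with $A_n := V_{g_n} A V_{-g_n}$ and $g_n \to \infty$, the first point yields $\|A_n\| = \|A\|$ for every $n$, so the task reduces to showing that a $\Pp$-strong limit cannot increase the norm.

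For this I would invoke the characterization of $\Pp$-convergence in $\Ll(\XX, \Pp)$ recorded after Remark~\ref{rem:P-topology}: for each fixed $m$ one has $\|P_m(B - A_n)\| \to 0$ as $n \to \infty$. Hence $\|P_m B\| = \lim_n \|P_m A_n\| \leq \|P_m\| \cdot \|A\| \leq \|A\|$ for every $m$. For an arbitrary $x \in \XX$ this gives $\|P_m(Bx)\| \leq \|A\|\|x\|$, and combining with the approximate-identity inequality $\|Bx\| \leq \sup_m \|P_m(Bx)\|$ yields $\|Bx\| \leq \|A\|\|x\|$, as desired.

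The only mildly delicate point is the appeal to $\|P_m\| \leq 1$; this is specific to the coordinate-projection construction of $\Pp$ on $\ell^p(\Gg, Y)$ fixed just before the definition of limit operators, and is not automatic for a general approximate identity (where one would only obtain $\|B\| \leq \sup_m \|P_m\| \cdot \|A\|$). Beyond this, the argument is a standard lower-semicontinuity-of-the-norm computation and I do not expect substantial obstacles.
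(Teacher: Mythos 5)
Your argument is correct and follows essentially the same route as the paper: the paper's proof is a one-line appeal to Proposition~1.1.17 of Rabinovich--Roch--Silbermann together with $\|P_n\|\leq 1$, and your computation (isometry of the shifts, $\|P_m(B-A_n)\|\to 0$ for fixed $m$, and the approximate-identity inequality $\sup_m\|P_m y\|\geq\|y\|$) is precisely a self-contained proof of that cited semicontinuity statement in this setting. Your closing remark that $\|P_m\|\leq 1$ is special to the coordinate-projection construction of $\Pp$ is also exactly the point the paper flags.
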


\begin{proof}
  This is a direct consequence of \cite[Proposition 1.1.17]{RabinovichRochSilbermann2004} and the fact that $\|P_n\|\leq 1$ for all $n\in\NM$.
\end{proof}

\begin{theo}
\label{thm:resovlent_norms}
  Let $(\Omega,\Gg,\alpha)$ be a dynamical system, $A:\Omega\to \Ll(\XX,\Pp)$ a family of operators over $(\Omega,\Gg,\alpha)$.
  Let $\Sigma:=\underset{\omega\in\Omega}{\bigcup}\spec(A(\omega))$,
  $\lambda\in\CM\setminus \Sigma$, and $\omega,\nu\in\Omega_\PE$. Then
  \[\|(\lambda-A(\omega))^{-1}\| = \|(\lambda-A(\nu))^{-1}\|.\]
  If $(\Omega,\Gg,\alpha)$ is minimal then
  \[\|(\lambda-A(\omega))^{-1}\| = \|(\lambda-A(\nu))^{-1}\|\]
  holds for all $\omega,\nu\in\Omega$.
\end{theo}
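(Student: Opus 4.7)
The strategy is to show that the resolvent $R(\nu) := (\lambda - A(\nu))^{-1}$ is a limit operator of $R(\omega) := (\lambda - A(\omega))^{-1}$ whenever $\omega, \nu \in \Omega_{\PE}$ and $\lambda \notin \Sigma$. Given this, Lemma~\ref{lem:limit_operator_norm} will imply $\|R(\nu)\| \le \|R(\omega)\|$, and interchanging the roles of $\omega$ and $\nu$ (both pseudoergodic, so that $L(\omega) = L(\nu) = \Omega$ and thus each lies in the other's limit set) yields the reverse inequality and hence equality. The minimal-system statement is then an immediate consequence via Proposition~\ref{prop:minimal_all_elements_pseudo}, which identifies $\Omega$ with $\Omega_{\PE}$.

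To carry out the main step, I pick a sequence $(g_n)$ in $\Gg$ with $g_n\to\infty$ and $\alpha(g_n)(\omega)\to\nu$; such a sequence exists because $\nu\in L(\omega)=\Omega$. By equivariance and $\Pp$-continuity of $A(\cdot)$, the conjugates $V_{g_n}A(\omega)V_{-g_n}=A(\alpha(g_n)(\omega))$ $\Pp$-converge to $A(\nu)$. Because each shift $V_g$ is an isometric bijection on $\XX=\ell^p(\Gg,Y)$, conjugation by $V_{g_n}$ preserves operator norms, so the inverses
\[
V_{g_n}R(\omega)V_{-g_n}\ =\ (\lambda-V_{g_n}A(\omega)V_{-g_n})^{-1}
\]
all have norm exactly $\|R(\omega)\|$ and thus form a uniformly bounded sequence. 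Applying the standard continuity-of-inversion principle for the $\Pp$-strong topology (see \cite[Proposition 1.1.17]{RabinovichRochSilbermann2004}), which upgrades $\Pp$-convergence of operators with uniformly bounded inverses to $\Pp$-convergence of the inverses, one concludes $V_{g_n}R(\omega)V_{-g_n}\pto R(\nu)$. This means exactly that $R(\nu)\in\sop(R(\omega))$, and Lemma~\ref{lem:limit_operator_norm} delivers $\|R(\nu)\|\le\|R(\omega)\|$.

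The main technical hurdle is this inversion step, which tacitly requires $R(\omega)\in\Ll(\XX,\Pp)$ so that $\sop(R(\omega))$ is defined and the $\Pp$-theoretic inversion result applies. Fortunately, the isometry of the shifts $V_{g_n}$ produces the uniform bound on the sequence of inverses for free, so the hypotheses of the $\Pp$-framework are met using only the equivariance and $\Pp$-continuity already built into Definition~\ref{def:family_of_operators} together with $\lambda\notin\Sigma$. Once this step is established the remainder is symmetric: swapping $\omega$ and $\nu$ gives the opposite inequality, and the minimal case is inherited from the pseudoergodic case via Proposition~\ref{prop:minimal_all_elements_pseudo}.
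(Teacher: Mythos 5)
Your overall strategy coincides with the paper's: both proofs show that the resolvent at $\nu$ is a limit operator of the resolvent at $\omega$, apply Lemma~\ref{lem:limit_operator_norm}, and conclude by symmetry and, for the minimal case, Proposition~\ref{prop:minimal_all_elements_pseudo}. The difference lies only in how that central step is justified, and there your argument has a genuine gap. The inversion principle you invoke --- that $A_n\pto A$ with all $A_n$ invertible and $\sup_n\|A_n^{-1}\|<\infty$ implies $A_n^{-1}\pto A^{-1}$ --- is correct, but only under the additional hypothesis $A^{-1}\in\Ll(\XX,\Pp)$: writing $A_n^{-1}-A^{-1}=A_n^{-1}(A-A_n)A^{-1}=A^{-1}(A-A_n)A_n^{-1}$, one needs $A^{-1}K$ and $KA^{-1}$ to lie in $\Kk(\XX,\Pp)$ in order to exploit the $\Pp$-convergence of $(A_n)$, and that is precisely membership of $A^{-1}$ in the multiplier algebra. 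Likewise, $\sop(R(\omega))$ and Lemma~\ref{lem:limit_operator_norm} are only defined and available for operators in $\Ll(\XX,\Pp)$. You flag exactly this requirement, but then dismiss it by pointing to the uniform bound $\|V_{g_n}R(\omega)V_{-g_n}\|=\|R(\omega)\|$ coming from isometry of the shifts; that bound is a different (and much easier) matter and does not give $R(\omega),R(\nu)\in\Ll(\XX,\Pp)$. What is actually needed is the inverse closedness of $\Ll(\XX,\Pp)$ in $\Ll(\XX)$, a true but nontrivial fact of the $\Pp$-theory. Moreover, \cite[Proposition 1.1.17]{RabinovichRochSilbermann2004}, which you cite for the inversion step, is the norm-semicontinuity estimate underlying Lemma~\ref{lem:limit_operator_norm}, not a continuity-of-inversion result, so the reference does not support the claimed step.

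The paper sidesteps both issues in one stroke: after reducing to $\lambda=0$, it observes that $A(\omega)^{-1}$ is a regularizer of $A(\omega)$ and invokes \cite[Theorem 16]{Seidel2014}, which yields $\sop(A(\omega)^{-1})=\{B^{-1}:\,B\in\sop(A(\omega))\}$ together with the required membership statements, whence Corollary~\ref{cor:operator_spectrum_pseudoergodic} identifies this set as $\{A(\mu)^{-1}:\mu\in\Omega\}$. If you replace your appeal to Proposition 1.1.17 by that theorem, or by inverse closedness of $\Ll(\XX,\Pp)$ combined with the resolvent-identity computation sketched above, your proof closes and is then essentially the paper's argument.
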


\begin{proof}
  Clearly, $\sop(\lambda-A(\omega)) = \lambda-\sop(A(\omega))$. Therefore, without loss of generality, we may suppose that $\lambda=0$.
  Since $A(\omega)^{-1}$ trivially is a regularizer for $A(\omega)$, i.e.,\ $A(\omega)^{-1}A(\omega)-I, A(\omega)A(\omega)^{-1}-I\in \Kk(\XX,\Pp)$,
  by \cite[Theorem 16]{Seidel2014} (which was proven there only for $\Gg=\ZM^N$, 
  but generalizes to arbitrary groups without any difficulty) we have
  \[\sop(A(\omega)^{-1}) = \{B^{-1}:\; B\in\sop(A(\omega))\} = \{A(\nu)^{-1}:\; \nu\in\Omega\}\]
  due to Corollary \ref{cor:operator_spectrum_pseudoergodic}.
  Thus, by Lemma \ref{lem:limit_operator_norm} we obtain
  \[\|A(\nu)^{-1}\| \leq  \|A(\omega)^{-1}\|.\]
  Interchanging the roles of $\omega$ and $\nu$ yields the first part of our assertion.

  In case of minimality, we just note that $\Omega_\PE = \Omega$ holds by Proposition \ref{prop:minimal_all_elements_pseudo}.
\end{proof}

The super-level sets of the resolvent norm yield the so-called pseudospectra:

\begin{defini}[pseudospectrum]
  Let $A\in \Ll(\XX)$, $\varepsilon>0$. Then we define the \emph{$\varepsilon$-pseudospectrum of $A$} by
  \[\spec_\varepsilon(A):=\{\lambda\in\CM:\; \|(\lambda-A)^{-1}\|> \tfrac{1}{\varepsilon}\},\]
  where we set $\|(\lambda-A)^{-1}\|:=\infty$ if $\lambda-A$ is not boundedly invertible.
\end{defini}

For normal (in particular selfadjoint) operators $A$, $\spec_\varepsilon(A)$ is the $\varepsilon$-neighbourhood of the spectrum.
In general, it is larger. Part of the attraction of pseudospectra is that $\spec_\varepsilon(A)$ equals the union of
$\spec(A+T)$ over all perturbations $\|T\|<\varepsilon$. The interplay between resolvents of $A$ and semigroup $(e^{tA})_{t\geq 0}$ moreover
allows bounds on the transient growth of the semigroup in terms of the pseudospectrum (whereas the spectrum just determines
the asymptotics as $t\to\infty$). See the monograph \cite{TrefethenEmbree2005} for more aspects of pseudospectra.

In view of Theorem \ref{thm:resovlent_norms} we can also obtain equality of pseudospectra for families of operators.

\begin{coro}
\label{cor:pseudospectra}
  Let $(\Omega,\Gg,\alpha)$ be a dynamical system, $A:\Omega\to \Ll(\XX,\Pp)$ a family of operators over $(\Omega,\Gg,\alpha)$, $\varepsilon>0$.
  Then
  \[\spec_\varepsilon(A(\omega)) = \spec_\varepsilon(A(\nu))\]
  holds for all $\omega,\nu\in\Omega_\PE$.
  If $(\Omega,\Gg,\alpha)$ is minimal then
  \[\spec_\varepsilon(A(\omega)) = \spec_\varepsilon(A(\nu))\]
  holds for all $\omega,\nu\in\Omega$.
\end{coro}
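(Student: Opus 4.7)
The proof is essentially a direct combination of Theorem~\ref{thm:constant_spectra_pseudo-ergodic_groups} and Theorem~\ref{thm:resovlent_norms}, using the standard fact that the spectrum is always contained in any $\varepsilon$-pseudospectrum. I would split $\CM$ into the two natural regions $\Sigma$ and $\CM\setminus\Sigma$ and verify set equality on each part separately.

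First, for the pseudoergodic case, fix $\omega,\nu\in\Omega_\PE$ and $\varepsilon>0$, and let $\lambda\in\CM$. If $\lambda\in\Sigma$, then by Theorem~\ref{thm:constant_spectra_pseudo-ergodic_groups} we have $\lambda\in\spec(A(\omega))\cap\spec(A(\nu))$, so $\lambda-A(\omega)$ and $\lambda-A(\nu)$ are both not boundedly invertible; by the convention in the definition of the pseudospectrum their resolvent norms are declared to equal $\infty$, and hence $\lambda$ lies in both $\spec_\varepsilon(A(\omega))$ and $\spec_\varepsilon(A(\nu))$. If $\lambda\in\CM\setminus\Sigma$, then Theorem~\ref{thm:resovlent_norms} gives
\[\|(\lambda-A(\omega))^{-1}\|\ =\ \|(\lambda-A(\nu))^{-1}\|,\]
so the condition $\|(\lambda-A(\omega))^{-1}\|>1/\varepsilon$ holds if and only if $\|(\lambda-A(\nu))^{-1}\|>1/\varepsilon$. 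In either case $\lambda\in\spec_\varepsilon(A(\omega))$ iff $\lambda\in\spec_\varepsilon(A(\nu))$, proving the first assertion.

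For the minimal case I would simply invoke Proposition~\ref{prop:minimal_all_elements_pseudo}, which tells us $\Omega_\PE=\Omega$, and apply the statement just proven.

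There is no real obstacle here: the work has already been done in Theorem~\ref{thm:resovlent_norms} (which handled the delicate non-selfadjoint resolvent norm comparison via $\sop((\lambda-A(\omega))^{-1})=\{(\lambda-A(\nu))^{-1}:\nu\in\Omega\}$). The only small point to be careful about is remembering to treat $\lambda\in\Sigma$ separately, since Theorem~\ref{thm:resovlent_norms} only asserts equality of resolvent norms for $\lambda\notin\Sigma$; this is handled by the convention $\|(\lambda-A)^{-1}\|=\infty$ for $\lambda\in\spec(A)$, together with the constancy of the spectrum on $\Omega_\PE$.
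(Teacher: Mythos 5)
Your proof is correct and follows exactly the route the paper intends: the paper gives no explicit proof of this corollary, stating only that it follows ``in view of Theorem~\ref{thm:resovlent_norms}'', and your argument---splitting $\CM$ into $\Sigma$ (where Theorem~\ref{thm:constant_spectra_pseudo-ergodic_groups} and the convention $\|(\lambda-A)^{-1}\|=\infty$ apply) and $\CM\setminus\Sigma$ (where Theorem~\ref{thm:resovlent_norms} gives equality of resolvent norms)---is precisely the omitted verification. The minimal case via Proposition~\ref{prop:minimal_all_elements_pseudo} is likewise as intended.
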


Note that Corollary \ref{cor:pseudospectra} contains our corresponding theorem on spectra as a special case (after letting $\varepsilon\to 0$).

\begin{exam}[Operators of finite propagation]
  Let $A\in \Ll(\XX)$. Then we say that $A$ is \emph{of finite propagation} if 
  \[\prop(A):=\sup\{d(g,h):\; g,h\in\Gg,\, (A\delta_g)(h) = 0\}<\infty,\]
  where $d$ is the path distance in $\Gg$ and 
  \[\delta_g(h):=\begin{cases} 1 & h = g,\\ 0 & \text{else}\end{cases} \quad(g\in\Gg).\]
  Then, $\prop(A)$ is called \emph{propagation} of $A$. Note that each operator of finite propagation belongs to $\Ll(\XX,\Pp)$. 

  Now, let $A:\Omega\to \Ll(\XX,\Pp)$ be a family of operators over $(\Omega,\Gg,\alpha)$ with finite propagation.
  Typical models of this kind appear in the study of Schr\"odinger operators with random potentials on $\Gg$.
\end{exam}

\begin{exam}[Band-dominated operators]
  Since $\Ll(\XX,\Pp)$ is closed in $\Ll(\XX)$, norm-limits of operators of finite propagation are contained in $\Ll(\XX,\Pp)$.
  Such operators are called \emph{band-dominated operators}, and are typically studied for the case $\Gg=\ZM^N$, see e.g.~\cite{RabinovichRochSilbermann1998,RabinovichRochSilbermann2004,Lindner2006,CL10,Seidel2014}, %\textcolor{red}{Marko, bitte Zitate einfuegen},
  and on $\ell^2(\Gg)$ for a finitely generated group $\Gg$ in \cite{Roe2005} by means of $C^*$-algebra techniques.  
\end{exam}

\section{Operators over finitely generated groups}
\label{sec:operators_over_finitely_generated_groups}

\bigskip

Let us now assume that $(\Gg,+)$ is finitely generated, i.e.,\ there exists $\Ss\subseteq \Gg$ finite such that for every $g\in\Gg$ there exist $n\in\NM$ and $s_1,\ldots,s_n\in\Ss\cup(-\Ss)$ such that $g = s_1+\ldots+s_n$.

Let $\Omega$ be a compact metric space and for $s\in\Ss$ let $\alpha(s):\Omega\to\Omega$ be a homeomorphism. We may assume that
$\alpha(-s) = \alpha(s)^{-1}$ for all $s\in \Ss$ (either it is a definition of $\alpha(-s)$ or it is an assumption, depending on whether $-s\in\Ss$ or not), and that
if $g = s_1+s_2+\ldots s_n\in\Gg$ then
\[\alpha(g):=\alpha(s_1)\circ \alpha(s_2)\circ\ldots\circ \alpha(s_n)\]
is well-defined, i.e.,\ if $g = s_1+\ldots+s_n = s_1'+\ldots +s_m'$ then
\[\alpha(s_1)\circ\ldots\circ \alpha(s_n) = \alpha(s_1')\circ\ldots\circ \alpha(s_m').\]

\begin{lemma}
  For $g,h\in\Gg$ we have
  $\alpha(g+h) = \alpha(g)\circ\alpha(h)$. Hence, $(\Omega,\Gg,\alpha)$ is a dynamical system.
\end{lemma}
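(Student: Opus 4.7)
The plan is to leverage the well-definedness hypothesis on $\alpha$ already imposed in the paragraph preceding the lemma, which essentially reduces the proof to a concatenation argument. The only real work is unpacking the definitions.

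First, I would fix arbitrary $g,h\in\Gg$ and, using that $\Gg$ is generated by $\Ss$, pick representations $g=s_1+\ldots+s_n$ and $h=t_1+\ldots+t_m$ with $s_i,t_j\in\Ss\cup(-\Ss)$. Concatenating these gives a representation $g+h=s_1+\ldots+s_n+t_1+\ldots+t_m$ as a sum of generators. By the standing well-definedness assumption on $\alpha$, the value $\alpha(g+h)$ does not depend on the chosen representation, so
\[
\alpha(g+h)\ =\ \alpha(s_1)\circ\ldots\circ\alpha(s_n)\circ\alpha(t_1)\circ\ldots\circ\alpha(t_m).
\]
Associativity of composition then splits this into $\bigl(\alpha(s_1)\circ\ldots\circ\alpha(s_n)\bigr)\circ\bigl(\alpha(t_1)\circ\ldots\circ\alpha(t_m)\bigr)=\alpha(g)\circ\alpha(h)$, which is the homomorphism property.

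Next, to conclude that $(\Omega,\Gg,\alpha)$ is a dynamical system in the sense of Section~\ref{sec:dynamical_systems}, I still need to check that $\alpha(g)$ is a homeomorphism of $\Omega$ for every $g\in\Gg$. But $\alpha(g)$ is, by definition, a finite composition of maps of the form $\alpha(s)$ with $s\in\Ss$ and their inverses $\alpha(s)^{-1}=\alpha(-s)$; each of these is a homeomorphism by hypothesis, and the class of homeomorphisms of $\Omega$ is closed under composition, so $\alpha(g)$ is a homeomorphism. Together with the homomorphism identity just proved, this matches the definition of a dynamical system.

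The only subtle point, and the one I would flag explicitly, is what happens for the neutral element $0\in\Gg$: writing $0=s+(-s)$ for any $s\in\Ss$ and invoking $\alpha(-s)=\alpha(s)^{-1}$ forces $\alpha(0)=\mathrm{id}_\Omega$, which is consistent with the well-definedness assumption and ensures that $\alpha$ is genuinely a group homomorphism rather than merely a semigroup homomorphism. Apart from this sanity check, there is no real obstacle; the lemma is essentially a formal consequence of the definitions the authors have set up just before it.
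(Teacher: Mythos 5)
Your proof is correct and follows essentially the same route as the paper: write $g$ and $h$ as words in $\Ss\cup(-\Ss)$, concatenate, and invoke the standing well-definedness assumption together with associativity of composition. The additional checks you flag (that each $\alpha(g)$ is a homeomorphism and that $\alpha(0)=\mathrm{id}_\Omega$) are sound and slightly more careful than the paper's one-line computation, but they do not change the argument.
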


\begin{proof}
  Write $g = s_1+\ldots+s_n$ and $h = s_1'+\ldots+s_m'$ with $s_1,\ldots,s_n,s_1',\ldots,s_m'\in\Ss\cup(-\Ss)$. Then $g+h = s_1+\ldots+s_n + s_1'+\ldots+s_m'$ and so the equalities
  \begin{align*}
    \alpha(g+h) & = \alpha(s_1)\circ\ldots \circ\alpha(s_n)\circ\alpha(s_1')\circ\ldots\circ\alpha(s_m') = \alpha(g)\circ\alpha(h)
  \end{align*}
  hold.
\end{proof}

Recall that for $g\in\Gg$ we have defined the shift $V_gx:=(x_{h+g})_{h\in\Gg}$ $(x\in \XX)$.
Consider $A:\Omega\to \Ll(\XX,\Pp)$ such that
\begin{itemize}
\item[$(i')$] $A(\alpha(s)(\omega))=V_{s}A(\omega)V_{-s}$ for all $\omega\in\Omega$, $s\in \Ss$.\hfill\textit{(Equivariance)}
\item[$(ii)$] The map $\omega\mapsto A(\omega)$ is $\Pp$-continuous.\hfill\textit{(Continuity)}
\end{itemize}

\begin{lemma}
\label{lem:suff_cond_family_operator}
  Let $A:\Omega\to \Ll(\XX,\Pp)$ satisfy condition $(i')$. Then, for $g\in\Gg$, $\omega\in\Omega$, we have
  \[A(\alpha(g)(\omega)) = V_{g} A(\omega)V_{-g},\]
  i.e.,\ $A$ fulfills $(i)$ in Definition \ref{def:family_of_operators}. If in addition $A$ also satisfies $(ii)$, then, $A$ is a family of operators over $(\Omega,\Gg,\alpha)$.
\end{lemma}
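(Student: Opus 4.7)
The plan is to prove $(i)$ by induction on the length of a word representing $g$, using the fact that the shifts themselves form a representation of $\Gg$, i.e.,\ $V_{g+h}=V_gV_h$ for all $g,h\in\Gg$ (which follows directly from $(V_gV_hx)_k=(V_hx)_{k+g}=x_{k+g+h}=(V_{g+h}x)_k$) and in particular $V_gV_{-g}=V_0=I$. Note that the well-definedness of $\alpha(g)$ asserted in the setup makes such an induction consistent.

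First I would upgrade condition $(i')$ from $s\in\Ss$ to $s\in\Ss\cup(-\Ss)$. Given $s\in\Ss$ and $\omega\in\Omega$, apply $(i')$ with $\omega$ replaced by $\alpha(-s)(\omega)=\alpha(s)^{-1}(\omega)$ to obtain $A(\omega)=A(\alpha(s)\alpha(-s)(\omega))=V_sA(\alpha(-s)(\omega))V_{-s}$. Multiplying by $V_{-s}$ from the left and $V_s$ from the right and using $V_{-s}V_s=I=V_sV_{-s}$ yields $A(\alpha(-s)(\omega))=V_{-s}A(\omega)V_s$, so $(i')$ holds for every $s\in\Ss\cup(-\Ss)$.

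For the inductive step, assume $A(\alpha(h)(\omega))=V_hA(\omega)V_{-h}$ holds for all $\omega\in\Omega$ whenever $h$ admits a representation as a sum of at most $n$ elements of $\Ss\cup(-\Ss)$. Given $g=s_1+\cdots+s_{n+1}$, set $h:=s_1+\cdots+s_n$ so that $g=h+s_{n+1}$ and $\alpha(g)=\alpha(h)\circ\alpha(s_{n+1})$. Then, applying the induction hypothesis at the point $\alpha(s_{n+1})(\omega)$ and the extended base case to $s_{n+1}$, I would compute
\[
A(\alpha(g)(\omega))=A\bigl(\alpha(h)(\alpha(s_{n+1})(\omega))\bigr)=V_hA(\alpha(s_{n+1})(\omega))V_{-h}=V_hV_{s_{n+1}}A(\omega)V_{-s_{n+1}}V_{-h}=V_gA(\omega)V_{-g},
\]
using $V_hV_{s_{n+1}}=V_g$ and $V_{-s_{n+1}}V_{-h}=V_{-g}$. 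This establishes $(i)$. The last sentence of the lemma is then immediate: if $A$ additionally satisfies $(ii)$, then $A$ satisfies both $(i)$ and $(ii)$ of Definition~\ref{def:family_of_operators} and is by definition a family of operators over $(\Omega,\Gg,\alpha)$.

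I do not anticipate a substantive obstacle: the only subtlety is checking that $(i')$ automatically extends from $\Ss$ to $-\Ss$, which is handled above, and the induction relies solely on the multiplicativity $V_{g+h}=V_gV_h$ of the shift representation together with the assumed well-definedness of $\alpha(g)$ independently of the chosen word.
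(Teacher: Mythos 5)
Your proof is correct and follows essentially the same route as the paper's: peel off one generator at a time using $(i')$ together with $V_{g+h}=V_gV_h$, which the paper writes as an iterated chain of equalities rather than a formal induction. The one point where you go beyond the paper is the explicit verification that $(i')$ extends from $\Ss$ to $\Ss\cup(-\Ss)$ by conjugating the identity $A(\omega)=V_sA(\alpha(-s)(\omega))V_{-s}$; the paper uses this silently when it applies $(i')$ to letters $s_j\in\Ss\cup(-\Ss)$, so your extra step closes a minor gap rather than changing the argument.
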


\begin{proof}
  Write $g = s_1+\ldots+s_n$ with $s_1,\ldots,s_n\in\Ss\cup(-\Ss)$.
  This yields
  \begin{align*}
    A(\alpha(g)(\omega)) & = A(\alpha(s_1)\circ \alpha(s_2)\circ\ldots\circ \alpha(s_n)(\omega)) = V_{s_1} A(\alpha(s_2)\circ\ldots\circ \alpha(s_n)(\omega))V_{-s_1} \\
    & = \ldots = V_{s_1}\ldots V_{s_n} A(\omega) V_{-s_n} \ldots V_{-s_1} = V_{g} A(\omega) V_{-g}. \qedhere
  \end{align*}
\end{proof}

Thus, we can apply Corollary \ref{cor:constant_spectra_minimal_groups} to obtain constancy of the spectrum whenever $(\Omega,\Gg,\alpha)$ is minimal.

\begin{exam}[Operators on the discrete Heisenberg group]
  We now give an example of a non-abelian group, where our results can be applied as well. Let $\Gg:=H_3(\ZM)$ be the discrete Heisenberg group, i.e.\ the group of $3\times 3$-matrices of the form
  \[\begin{pmatrix}
      1 & a & c \\
      0 & 1 & b \\
      0 & 0 & 1
    \end{pmatrix}\]
  with $a,b,c\in\ZM$, equipped with matrix multiplication. The group is non-abelian and generated by the two elements described by $(a,b,c)\in \set{(1,0,0),(0,1,0)}$.
  Typical examples of operators on the discrete Heisenberg group are Schr\"odinger operators, which are operators of finite propagation.
  They can be used to model random walks on the corresponding Cayley graph of the group, see for example \cite{BeguinValetteZuk1997}.
\end{exam}

\section{Operators over $\ZM^N$}
\label{sec:operators_over_ZN}
\bigskip

A particular example of a finitely generated group $\Gg$ is $\ZM^N$, with generators $e_j:=(\delta_{jk})_{k\in\{1,\ldots,N\}}$ ($j\in\{1,\ldots,N\}$), where $\delta$ denotes the Kronecker delta.
Let $\Omega$ be a compact metric space. For $j\in\{1,\ldots,N\}$ let $\alpha(e_j):\Omega\to\Omega$ be a homeomorphism. Assume that $\alpha(e_j)\circ\alpha(e_k) = \alpha(e_k) \circ \alpha(e_j)$ holds for all $j,k\in\{1,\ldots,N\}$.
We now extend $\alpha$ to $\ZM^N$: for $g\in\ZM^N$ define
\[\alpha(g):=\alpha(e_1)^{g_1}\circ \ldots\circ \alpha(e_n)^{g_n}.\]
The fact that $\alpha(e_j)$ commutes with $\alpha(e_k)$ for all $j,k\in \{1,\ldots,n\}$ yields that $\alpha(g)$ is well-defined. By definition, $\alpha$ is a representation of $\ZM^N$ in the set of homeomorphisms on $\Omega$.
Thus, $(\Omega,\ZM^N,\alpha)$ is a dynamical system.

Consider $A:\Omega\to \Ll(\XX,\Pp)$ such that
\begin{itemize}
\item[$(i')$] $A(\alpha(e_j)(\omega))=V_{e_j}A(\omega)V_{-e_j}$ for all $\omega\in\Omega$, $j\in\{1,\ldots,N\}$.\hfill\textit{(Equivariance)}
\item[$(ii)$] The map $\omega\mapsto A(\omega)$ is $\Pp$-continuous.\hfill\textit{(Continuity)}
\end{itemize}
According to Lemma~\ref{lem:suff_cond_family_operator}, the map $A:\Omega\to \Ll(\XX,\Pp)$ defines a family of operators over $(\Omega,\ZM^N,\alpha)$.
Thus, if $(\Omega,\ZM^N,\alpha)$ is minimal, then, Corollary \ref{cor:constant_spectra_minimal_groups} yields constancy of the spectrum.

\medskip

Let us remark that much more is known for operator families over $\ZM^N$.
Let $S^{N-1}:=\{\eta \in \RM^N:\; \abs{\eta} = 1\}$ be the unit sphere in $\RM^N$. For $R>0$, $\eta\in S^{N-1}$ and a neighborhood $U\subseteq S^{N-1}$ of $\eta$ we call
\[W_{R,U} := \{k\in \ZM^N:\; \abs{k}>R,\, \tfrac{k}{\abs{k}}\in U\}\]
a \emph{neighborhood at infinity of $\eta$}.
Let $g = (g_n)_{n\in\NM}$ be a sequence in $\ZM^N$ such that $g_n\to \infty$. We say that \emph{$g$ tends to infinity in the direction of $\eta\in S^{N-1}$} if for every neighborhood at infinity $W_{R,U}$ of $\eta$, there exists $n_0\in\NM$ such that
\[h_n\in W_{R,U} \quad(n\geq n_0).\]
Let $\eta\in S^{N-1}$ and $A\in \Ll(\XX,\Pp)$. The \emph{local operator spectrum} $\sop_\eta(A)$ of $A$ at $\eta$ is the set of all limit operators $A_g$ of $A$ with respect to sequences $g$ tending to infinity in the direction of $\eta$.

\begin{proposi}[{\cite[Proposition 2.3.2]{RabinovichRochSilbermann2004}}]
  Let $A\in \Ll(\XX,\Pp)$. Then $\sop(A) = \bigcup_{\eta\in S^{N-1}} \sop_\eta(A)$.
\end{proposi}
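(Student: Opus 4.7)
The plan is to prove the two set-theoretic inclusions separately. The inclusion $\bigcup_{\eta\in S^{N-1}}\sop_\eta(A)\subseteq\sop(A)$ is essentially immediate from the definitions: if $g=(g_n)$ tends to infinity in the direction of some $\eta\in S^{N-1}$, then for every finite (hence bounded) set $F\subseteq\ZM^N$ we may pick $R>\max_{k\in F}\abs{k}$ and any neighborhood $U$ of $\eta$ in $S^{N-1}$ to see that $g_n\in W_{R,U}$ eventually, whence $g_n\notin F$ eventually. Therefore $g_n\to\infty$ in the sense used to define $\sop(A)$, and consequently $A_g\in\sop(A)$.

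For the nontrivial inclusion $\sop(A)\subseteq\bigcup_{\eta\in S^{N-1}}\sop_\eta(A)$, let $B\in\sop(A)$ be realized as $B=\Plim_{n\to\infty}V_{g_n}AV_{-g_n}$ for some sequence $(g_n)$ in $\ZM^N$ leaving every finite set. Since every ball in $\ZM^N$ is finite, this hypothesis forces $\abs{g_n}\to\infty$, so after discarding finitely many indices the direction vectors $\eta_n:=g_n/\abs{g_n}$ all lie in the compact sphere $S^{N-1}$. A Bolzano--Weierstrass argument then supplies a subsequence with $\eta_{n_k}\to\eta$ for some $\eta\in S^{N-1}$. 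I would next check that the extracted subsequence $(g_{n_k})$ tends to infinity in the direction of $\eta$: given any neighborhood at infinity $W_{R,U}$ of $\eta$, the convergence $\eta_{n_k}\to\eta$ places $\eta_{n_k}$ in $U$ for large $k$, while $\abs{g_{n_k}}\to\infty$ gives $\abs{g_{n_k}}>R$ eventually, so $g_{n_k}\in W_{R,U}$ from some index onwards.

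To conclude I would invoke stability of $\Pp$-convergence under passing to subsequences: by the characterization in Remark~\ref{rem:P-topology} (or directly from the definition via $\norm{K(A_n-B)}+\norm{(A_n-B)K}\to 0$ for all $K\in\Kk(\XX,\Pp)$), $V_{g_{n_k}}AV_{-g_{n_k}}\pto B$ as well. Hence $B$ is the limit operator of $A$ along a sequence tending to infinity in the direction of $\eta$, i.e., $B\in\sop_\eta(A)$, completing the inclusion. I do not foresee a genuine obstacle; the whole argument is a direction-extraction by compactness of $S^{N-1}$ combined with the trivial observation that, in $\ZM^N$, leaving every finite set is equivalent to $\abs{g_n}\to\infty$. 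The only subtlety worth a line of verification is precisely this equivalence, which renders the two notions of ``$g_n\to\infty$'' compatible.
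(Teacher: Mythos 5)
Your argument is correct: the paper itself only cites this result from \cite[Proposition 2.3.2]{RabinovichRochSilbermann2004} without proof, but your direction-extraction via compactness of $S^{N-1}$, together with the observations that leaving every finite set in $\ZM^N$ is equivalent to $\abs{g_n}\to\infty$ and that $\Pp$-convergence passes to subsequences, is exactly the standard argument and mirrors the proof the paper gives for the analogous statement $L(\omega)=\bigcup_{\eta\in S^{N-1}}L^{\eta}(\omega)$. No gaps.
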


For $\eta\in S^{N-1}$ and $\omega\in\Omega$ define the \emph{limit set of $\omega$ at direction $\eta$} by
\[L^\eta(\omega):= \{ \nu\in\Omega :\; \text{$\exists\,(g_n)_{n\in\NM}$ in $\ZM^N$ tending to infinity in direction $\eta$ s.t. $\alpha(g_n)(\omega) \to \nu$}\}.\]

\begin{lemma}
  Let $\omega\in\Omega$. Then
  \[L(\omega) = \bigcup_{\eta\in S^{N-1}} L^\eta(\omega).\]
\end{lemma}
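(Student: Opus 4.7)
The plan is to prove the two inclusions separately, with the nontrivial direction relying on a standard compactness argument on the unit sphere.

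For the inclusion $\bigcup_{\eta\in S^{N-1}} L^\eta(\omega)\subseteq L(\omega)$, I would argue directly from the definitions: any sequence $(g_n)$ in $\ZM^N$ that tends to infinity in the direction of some $\eta\in S^{N-1}$ automatically satisfies $g_n\to\infty$ in the sense of leaving every finite set (since $\abs{g_n}>R$ for eventually all $n$, for every $R$). Hence any $\nu\in\Omega$ realized as $\lim_n \alpha(g_n)(\omega)$ along such a sequence lies in $L(\omega)$.

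For the reverse inclusion $L(\omega)\subseteq\bigcup_{\eta\in S^{N-1}} L^\eta(\omega)$, I would take $\nu\in L(\omega)$ with witnessing sequence $(g_n)_{n\in\NM}$ in $\ZM^N$ satisfying $g_n\to\infty$ and $\alpha(g_n)(\omega)\to\nu$. Since leaving every finite set in $\ZM^N$ forces $\abs{g_n}\to\infty$, the normalized vectors $g_n/\abs{g_n}$ are eventually defined and lie in the compact set $S^{N-1}$. By compactness, pass to a subsequence $(g_{n_k})$ such that $g_{n_k}/\abs{g_{n_k}}\to\eta$ for some $\eta\in S^{N-1}$. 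I then need to verify that this subsequence tends to infinity in the direction $\eta$: given a neighborhood at infinity $W_{R,U}$ of $\eta$, convergence $g_{n_k}/\abs{g_{n_k}}\to\eta\in U$ (with $U$ open) and $\abs{g_{n_k}}\to\infty$ together ensure $g_{n_k}\in W_{R,U}$ for all sufficiently large $k$. Finally, since $\alpha(g_{n_k})(\omega)\to\nu$ as a subsequence of a convergent sequence, we have $\nu\in L^\eta(\omega)$.

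The argument is essentially just compactness of $S^{N-1}$ combined with the definitions, so I do not expect any real obstacle. The only minor subtlety is making sure that a subsequential limit of $g_n/\abs{g_n}$ in $U$ implies eventual membership in $W_{R,U}$, which follows because $U$ is open and contains $\eta$. No deeper property of the dynamical system or of the $\Pp$-theory is needed here.
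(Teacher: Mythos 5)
Your proof is correct and follows essentially the same route as the paper: the inclusion $\supseteq$ is immediate from the definitions, and for $\subseteq$ one normalizes the witnessing sequence, uses compactness of $S^{N-1}$ to extract a subsequence with $g_{n_k}/\abs{g_{n_k}}\to\eta$, and observes that this subsequence tends to infinity in direction $\eta$. You in fact spell out the last verification (eventual membership in $W_{R,U}$) in more detail than the paper does.
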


\begin{proof}
  We only have to show ``$\subseteq$, the other inclusion is trivial.
  Let $\nu\in L(\omega)$. Then there exists $(g_n)_{n\in\NM}$ in $\ZM^N$ such that $g_n\to\infty$ and $\alpha(g_n)(\omega)\to \nu$.
  Without loss of generality we may assume $g_n\neq 0$ for all $n\in\NM$. Due to compactness of $S^{N-1}$, the sequence $(\tfrac{g_n}{|g_n|})$ has a convergent subsequence. Let $\eta$ be its limit.
  This implies $\nu\in L^\eta(\omega)$.
\end{proof}

\begin{proposi}
\label{prop:limit_sets} Let $\omega\in\Omega$ and $\eta\in S^{N-1}$. Then $L^\eta(\omega)$ is
non-empty, compact and invariant under $\alpha$.
\end{proposi}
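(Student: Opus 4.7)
The plan is to verify non-emptiness, compactness, and $\alpha$-invariance separately. For non-emptiness, I would produce a concrete sequence in $\ZM^N$ going to infinity in direction $\eta$—for instance the componentwise integer part $g_n := (\lfloor n\eta_j\rfloor)_{j=1}^N$—and then apply compactness of $\Omega$ to extract a convergent subsequence of $(\alpha(g_n)(\omega))$. Since $|g_n - n\eta| \leq \sqrt{N}$ while $|n\eta| = n$, one has $|g_n|\to\infty$ and $g_n/|g_n| \to \eta$, so every neighborhood at infinity $W_{R,U}$ of $\eta$ eventually contains $g_n$. The convergent subsequence therefore still tends to infinity in direction $\eta$, and its limit lies in $L^\eta(\omega)$.

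Compactness reduces to closedness, since $\Omega$ is compact metric. Take $\nu_k \to \nu$ with $\nu_k \in L^\eta(\omega)$ and, for each $k$, a realizing sequence $(g_n^{(k)})_n$ tending to infinity in direction $\eta$ with $\alpha(g_n^{(k)})(\omega) \to \nu_k$. A diagonal extraction finishes the job: choose $n_k$ so large that $g_{n_k}^{(k)} \in W_{k, U_k}$, where $U_k := \{\xi \in S^{N-1} : |\xi-\eta|<1/k\}$, and simultaneously $d(\alpha(g_{n_k}^{(k)})(\omega), \nu_k) < 1/k$; this is possible because $(g_n^{(k)})_n$ already tends to infinity in direction $\eta$ and $\nu_k$ is a limit of the orbit piece. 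Set $h_k := g_{n_k}^{(k)}$. Given any neighborhood at infinity $W_{R,U}$ of $\eta$, for all sufficiently large $k$ one has $k \geq R$ and $U_k \subseteq U$, hence $h_k \in W_{k,U_k} \subseteq W_{R,U}$; thus $(h_k)$ tends to infinity in direction $\eta$. The triangle inequality then gives $\alpha(h_k)(\omega) \to \nu$, proving $\nu \in L^\eta(\omega)$.

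For $\alpha$-invariance, let $\nu \in L^\eta(\omega)$ with realizing sequence $(g_n)$ and let $h \in \ZM^N$ be arbitrary. The translated sequence $(g_n + h)$ still tends to infinity in direction $\eta$, because $|g_n + h|/|g_n| \to 1$ and $(g_n+h)/|g_n+h| \to \eta$ as $|g_n| \to \infty$. By continuity of $\alpha(h)$ we have $\alpha(g_n+h)(\omega) = \alpha(h)(\alpha(g_n)(\omega)) \to \alpha(h)(\nu)$, so $\alpha(h)(\nu) \in L^\eta(\omega)$. Applying this with $-h$ yields the reverse inclusion, so $\alpha(h)(L^\eta(\omega)) = L^\eta(\omega)$. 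Of the three parts, the diagonal extraction in the closedness step is the one requiring care—one must couple the shrinking neighborhoods $U_k$ of $\eta$ with growing radii $k$ so that the extracted sequence $(h_k)$ inherits the prescribed direction at infinity.
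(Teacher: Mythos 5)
Your proof is correct and follows essentially the same route as the paper: non-emptiness by applying compactness of $\Omega$ to a sequence tending to infinity in direction $\eta$, invariance by translating the realizing sequence and using continuity of $\alpha(h)$, and closedness by a diagonal extraction. You merely make explicit two points the paper leaves implicit --- the concrete construction $g_n=(\lfloor n\eta_j\rfloor)_j$ of a direction-$\eta$ sequence, and the coupling of the shrinking neighborhoods $U_k$ with the growing radii $k$ in the diagonal argument --- which is a welcome addition rather than a deviation.
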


\begin{proof}
Let $(g_n)$ in $\ZM^N$ tend to infinity in direction $\eta$.
By compactness of $\Omega$, the sequence $(\alpha(g_n)(\omega))_{n\in\NM}$ has a convergent subsequence.
Thus, the set $L^\eta(\omega)$ is non-empty.

\medskip

Let
$\nu=\lim_{n\to\infty}\alpha(g_n)(\omega)\in L^\eta(\omega)$, where $(g_n)_{n\in\NM}$ tends to infinity in direction $\eta$.
Due to continuity, for $h\in\ZM^N$ we have
$$\lim_{n\to\infty}\alpha(h+g_n)(\omega) = \alpha(h)\left(\lim_{n\to\infty} \alpha(g_n)(\omega)\right) = \alpha(h)(\nu).$$
Clearly, also $(h+g_n)_{n}$ tends to infinity in direction $\eta$.
Hence, $\alpha(h)(\nu)\in L^\eta(\omega)$ follows.

\medskip

We now show closedness of $L^\eta(\omega)$, implying that it is compact.
Let
$(\nu_k)_{k\in\NM}$ in $L^\eta(\omega)$, $\nu_k\to \nu$. For
$k\in\NM$ there exists $(g^k_n)_{n\in\NM}$ tending to infinity in direction $\eta$ such that
$\nu_k=\lim_{n\to\infty} \alpha(g^k_n)(\omega)$.
Thus, for a suitable subsequence $(n_k)_{k\in\NM}$ we obtain that $(g^k_{n_k})_{k\in\NM}$ tends to infinity in direction $\eta$ and
$\lim_{k\to\infty}\alpha(g^k_{n_k})(\omega) = \nu$. Consequently,
$L^\eta(\omega)$ is closed.
\end{proof}

Note that in case $N=1$, we have $S^0 = \{-1,1\}$. Then $L^{+1}(\nu)$ is the so-called $\omega$-limit set and $L^{-1}(\nu)$ the $\alpha$-limit-set of $\nu$.

\begin{exam}[Random Schr\"odinger operators in $L_2(\RM^N)$] \label{ex:RandSchr}
  Let $Q:=(0,1)^N$ be the unit hypercube.
  Let $0\leq v\in L_\infty(\RM^N)$, $v|_{\RM^N\setminus Q} = 0$.
  Let $l>0$ and equip $[0,l]^{\ZM^N}$ with the product topology.
  Note that this topology is induced by the metric
  \[d(\omega,\nu):= \sum_{k\in\ZM^N} 2^{-|k|} \min\{|\omega_k-\nu_k|,1\}.\]
  Let $\Omega\subseteq [0,l]^{\ZM^N}$ be closed and translation invariant.
  Then $\Omega$ is compact and $\ZM^N$ naturally acts on $\Omega$. Let $\alpha$ be this action.
  For $\omega\in\Omega$, define
  \[V_\omega:=\sum_{k\in \ZM^N} \omega_k v(\cdot-k) \in L_\infty(\RM^N),\]
  and
  $H(\omega)$ in $L_2(\RM^N)$ by
  \begin{align*}
    D(H(\omega)) & := W_2^2(\RM^N),\\
    H(\omega) u & := -\Delta u + V_\omega u.
  \end{align*}
  Then $H(\omega) \geq 0$ holds for all $\omega\in\Omega$.

  Define $A(\omega):= (H(\omega)+1)^{-1}$. By \cite[Proposition 1.1.8]{RabinovichRochSilbermann2004}, a Combes-Thomas estimate \cite[Theorem 2.4.1]{Stollmann2001}
  and identifying $L_2(\RM^N) = \ell^2(\ZM^N,L_2(Q))=:\XX$ we obtain $A(\omega)\in \Ll(\XX,\Pp)$ for all $\omega\in\Omega$ where $P_n:=\1_{\{-n,\ldots,n\}^N}$ for $n\in\NM$.
  Furthermore, $\norm{A(\omega)}\leq 1$ holds for all $\omega\in\Omega$.
  By construction, $(i)$ is satisfied.
  Let us turn to condition $(ii)$, so let $(\omega^n)$ be a sequence in $\Omega$, $\omega^n\to\omega$.
  By the second resolvent identity it suffices to check that $\Plim_{n\to\infty}V_{\omega^n}=V_\omega$ as multiplication operators. Since $(V_{\omega^n})$ is uniformly bounded, we have to verify that
$$\lim\limits_{n\to\infty}\|P_m(V_{\omega}-V_{\omega^n})\| + \|(V_{\omega}-V_{\omega^n})P_m\|= 0 \quad (m\in\NM) .$$
  Let $m\in\NM$ and $\varepsilon>0$. Since $\omega^n \to\omega$, there exists $n_0\in \NM$ such that $|\omega^n_k-\omega_k|\leq \varepsilon$ for all $n\geq n_0$ and $k\in\{-m,\ldots,m\}^N$.
  Hence, $\|P_m(V_{\omega}-V_{\omega^n})\| + \|(V_{\omega}-V_{\omega^n})P_m\|$ tends to zero if $n$ goes to infinity.

  Thus, if $(\Omega,\ZM^N,\alpha)$ is minimal, then, the spectrum of $(A(\omega))_{\omega\in\Omega}$ is constant.
  Note that, for $\lambda\neq 0$, we have $\lambda\in \spec(A(\omega))$ if and only if $\frac{1-\lambda}{\lambda}\in \spec(H(\omega))$ (recall that $0\in \spec(A(\omega))$ since $H(\omega)+1$ is not bounded).
  Hence, we can also conclude constancy of the spectrum for the unbounded operator family $(H(\omega))_{\omega\in\Omega}$ with the help of our techniques.
\end{exam}

\end{document}